 \theoremstyle{plain}
\numberwithin{equation}{section}
\newtheorem{Theorem}{Theorem}
\numberwithin{Theorem}{section}
\newtheorem{Lemma}[Theorem]{Lemma}
\title[Global existence of classical solutions]{Global existence of classical solutions for\\ a nonlocal
one dimensional parabolic \\free boundary problem}
\author{Rossitza Semerdjieva}
\address{Institute of Mathematics and Informatics, Bulgarian Academy
of Sciences, Acad. G. Bonchev Str., Bl. 8, 1113 Sofia, Bulgaria}
\email{rsemerdjieva@yahoo.com, rossitza@math.bas.bg}
\begin{document}

\begin{abstract}
In this paper we study one dimensional parabolic free boundary value
problem with a nonlocal (integro-differential) condition on the free
boundary.  We establish global existence--uniqueness of classical
solutions assuming that the initial-boundary data are sufficiently
smooth and satisfy some compatibility conditions. Our approach is
based on analysis of an equivalent system of nonlinear integral
equations.
\vspace{2mm} \\

{\bf Keywords:} free boundary problem, parabolic equation, mixed type
boundary conditions, system of nonlinear integral equations.
\vspace{2mm}

{\bf 2010 AMS Subject Classification:} 35R35, 35K10, 45G15.
\end{abstract}

\maketitle

\section{Introduction}

In this paper we consider the following free boundary value problem.

{\bf Problem $\bf P.$}  Find $s(t)>0$ and $u(x,t) $ such that
\begin{equation}
\label{1}  u_t = u_{xx}- \lambda u, \quad  \lambda =const>0, \quad 0
< x < s(t), \;\; t>0;
\end{equation}
\begin{equation}
\label{2}  s^\prime (t) =  \int_0^{s(t)} \left (u(x,t) -
\tilde{\sigma}
 \right ) dx,   \quad t > 0, \quad
 \tilde{\sigma}=const >0;
\end{equation}
\begin{equation}
\label{3} u (0, t) = f(t),  \quad    f(t)
> 0, \quad  t\geq 0;
\end{equation}
\begin{equation}
\label{4}  u(x,0) = \varphi (x),  \quad \varphi (x)
>0, \;\; x\in [0,b], \;\; s(0)=b >0, \quad  \varphi (0) = f(0);
\end{equation}
\begin{equation}
\label{5} u_x (s(t), t) = 0, \quad t>0.
\end{equation}
This is one-dimensional free boundary problem with unknown boundary
$x= s(t). $ Notice that (\ref{2}) is a nonlocal condition on the free
boundary, and (\ref{3})--(\ref{5}) are mixed type boundary conditions
for the parabolic equation (\ref{1}).

The aim of this paper is to investigate the existence and uniqueness
of classical solutions of Problem $P$. To this end we reduce the
problem to a system of nonlinear integral equations and analyze its
local solvability. The same approach has been used in many papers on
the one-dimensional Stefan problem and its variations in order to
prove existence-uniqueness results -- see, for instance, \cite[Ch.
8]{F} and \cite{Rub} and the bibliography given there. In the context
of tumor models, existence-uniqueness results for free boundary
problems similar to Problem $P$ are obtained in \cite[Theorem
3.1]{FR99} and \cite[Theorem 2.1]{CF01} by the same method. However,
the presence of mixed type boundary conditions in Problem $P$ brings
to some additional difficulties, and as far as we know this case has
not been studied yet.

Our main result is the following.

\begin{Theorem}
\label{thm1} Suppose that
\begin{equation}
\label{6} f(t) \in C^1([0,\infty)), \quad f(t)> 0 \;\; \text{for}\;
\; t \geq 0, \quad \varphi (x) \in C^2([0,b]),
\end{equation}
$$\varphi (x)>0 \;\;
\text{for} \; x\in [0,b], \quad f(0) =\varphi (0), \quad f^\prime (0)
= \varphi^{\prime \prime} (0) - \lambda \varphi (0), \quad
\varphi^\prime (b) =0.
$$
Then there exists a unique pair of functions $u (x,t)$ and $s(t) $
such that

(i) \hspace{1mm} $u (x,t) $ is defined, continuous and has continuous
partial derivatives $u_x, \, u_t, \, u_{xx} $ in the domain $\{(x,t):
\;  0 \leq x \leq s(t), \, t \geq 0 \};$

(ii) \hspace{1mm} $s(t) \in C^1 ([0,\infty)); $

(iii) \hspace{1mm} (\ref{1})--(\ref{5}) hold.
\end{Theorem}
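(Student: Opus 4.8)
The plan is to follow the classical scheme for one-phase parabolic free boundary problems (cf.\ \cite[Ch.~8]{F}, \cite{FR99}, \cite{CF01}): transform Problem~$P$ into an equivalent system of nonlinear Volterra integral equations, solve this system locally by the contraction mapping principle, and then extend the solution to all of $[0,\infty)$ by a priori estimates. Let $\Gamma(x,t;\xi,\tau)=\big(4\pi(t-\tau)\big)^{-1/2}\exp\!\big(-\lambda(t-\tau)-\tfrac{(x-\xi)^2}{4(t-\tau)}\big)$ be the fundamental solution of \eqref{1}. First I would extend $\varphi$ to some $\tilde\varphi\in C^{2}([0,\infty))$ and set $U(x,t)=\int_0^\infty\Gamma(x,t;\xi,0)\,\tilde\varphi(\xi)\,d\xi$, which solves \eqref{1} in $\{x>0,\,t>0\}$, is smooth up to $t=0$, and satisfies $U(x,0)=\tilde\varphi(x)$. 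Writing $u=U+w$ reduces Problem~$P$ to the analogous problem for $w$ with \emph{zero} initial data on $[0,b]$, Dirichlet datum $f(t)-U(0,t)$ at $x=0$, Neumann datum $-U_x(s(t),t)$ at $x=s(t)$, and nonlocal relation $s'(t)=\int_0^{s(t)}\big(U(x,t)+w(x,t)-\tilde\sigma\big)\,dx$; the compatibility conditions in \eqref{6} are exactly equivalent to the initial data of $w$ vanishing and being compatible with these new boundary data at the corners $(0,0)$ and $(b,0)$.

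I would then represent $w$ as the sum of a double-layer heat potential over the line $x=0$ with unknown density $\psi_1$ — adapted to the Dirichlet condition — and a single-layer heat potential over the moving curve $x=s(\tau)$ with unknown density $\psi_2$ — adapted to the Neumann condition. Imposing the Dirichlet condition and using the jump relation expresses $\psi_1$ through $\psi_2$ and $s$; imposing the Neumann condition and using the jump relation for the spatial derivative of a single layer (which contributes the term $\tfrac12\psi_2(t)$) yields a Volterra integral equation of the second kind for $\psi_2$, with weakly singular kernel depending on $s$; and integrating $u=U+w$ in $x$ over $[0,s(t)]$ and inserting the outcome into \eqref{2} yields a third equation of the form $s(t)=b+\int_0^t G(\tau;\psi_2,s)\,d\tau$. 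Since $w$ has zero (compatible) initial data, no volume potential enters and no corner singularity appears, so one checks that $\psi_1,\psi_2\in C([0,T])$ with $\psi_1(0)=\psi_2(0)=0$ and that the integral system is \emph{equivalent} to Problem~$P$ with the regularity in (i)--(ii). Carrying out this reduction carefully — in particular the jump relations over the \emph{moving} boundary and the bookkeeping forced by the mixed Dirichlet--Neumann structure — is the first (and less routine than in the classical Stefan case) technical obstacle.

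For local existence, work on the complete metric space of pairs $(\psi_2,s)$ with $\psi_2\in C([0,\delta])$, $\psi_2(0)=0$, $s\in C^1([0,\delta])$, $s(0)=b$, $\tfrac b2\le s\le 2b$ and $|s'|\le C_0$. The right-hand sides of the two remaining integral equations (for $\psi_2$ and $s$) define a map whose Lipschitz constant is $O(\delta^{1/2})$, by the elementary bound $\int_0^t(t-\tau)^{-1/2}d\tau=2t^{1/2}$ for weakly singular Volterra kernels together with the Lipschitz dependence of the kernels on $s$ (here $s\in C^1$ and $s$ bounded below are used). Hence for $\delta$ small, depending only on $\|f\|_{C^1}$, $\|\tilde\varphi\|_{C^2}$ and $b$, the map is a contraction; its unique fixed point, reinserted into the potentials, gives a unique local classical solution $(u,s)$, which parabolic regularity up to the boundary upgrades to the class (i)--(ii).

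Finally, to globalize, I would derive a priori bounds valid on any interval $[0,T]$ of existence. The weak maximum principle for \eqref{1} — available with the lower-order term $-\lambda u$ because $\lambda>0$, and extended to the moving Neumann boundary \eqref{5} via Hopf's lemma (the domain $\{x<s(t)\}$ has an interior-ball property along its $C^1$ lateral boundary) — gives $0<u(x,t)\le M:=\max\{\max_{[0,b]}\varphi,\ \max_{[0,T]}f\}$. Inserting this into $s'(t)=\int_0^{s(t)}u\,dx-\tilde\sigma s(t)$ yields $-\tilde\sigma\,s(t)\le s'(t)\le M\,s(t)$, whence
\[
b\,e^{-\tilde\sigma t}\ \le\ s(t)\ \le\ b\,e^{Mt},\qquad |s'(t)|\le (M+\tilde\sigma)\,b\,e^{Mt},
\]
so on every finite interval $s$ stays strictly positive and bounded, with bounded derivative. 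Since the local existence time above, together with an a priori bound for $\psi_2$ on all of $[0,T]$ obtained from a Gronwall estimate on its Volterra equation, depends only on these quantities and on the data, a continuation argument extends the solution to $[0,\infty)$: no finite-time blow-up of $s$, $s'$, $u$ or $u_x$ can occur as long as $s$ remains in $[b\,e^{-\tilde\sigma T},b\,e^{MT}]$. Global uniqueness follows from the local uniqueness applied on successive intervals, or directly from a Gronwall estimate for the difference of two solutions in the integral formulation. The second, more quantitative, obstacle is thus making this continuation effective — bounding $\psi_2$, hence $u$ and its derivatives, on the \emph{whole} of $[0,T]$ by exploiting the Volterra structure.
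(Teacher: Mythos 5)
Your overall scheme (reduction to a system of nonlinear Volterra integral equations, local solvability by contraction, a priori bounds from a maximum principle, continuation) coincides with the paper's, but at the crucial reduction step you take a genuinely different route, and that route has a gap. The paper does not attack Problem $P$ directly with layer potentials: it first passes to the auxiliary unknown $\tilde u = e^{\lambda t}u_x$ (Lemma~\ref{lem3} and Problem~$\tilde P$), which swaps the Dirichlet and Neumann conditions and, decisively, turns the free-boundary condition into the homogeneous Dirichlet condition $\tilde u(s(t),t)=0$. The point of this device is that every quantity the argument must control up to the \emph{closed} domain --- in particular $u_{xx}=e^{-\lambda t}\tilde u_x$, and hence $u_t$ --- becomes a \emph{first} derivative of the new unknown, reachable through the $(t-\tau)^{-1/2}$ jump relation of Lemma~\ref{lemF} with merely continuous densities $v$ and $s^\prime$.

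The gap in your direct approach is precisely at this point. Theorem~\ref{thm1} requires $u_{xx}$ and $u_t$ to extend continuously to the closed domain, including $x=s(t)$ and the corners. In your representation of $w$ by a single-layer potential over the moving curve with continuous density $\psi_2$, the second spatial derivative has kernel $\Gamma_{xx}\sim (t-\tau)^{-3/2}$, which is not integrable; continuity of second derivatives up to the supporting curve requires H\"{o}lder (or at least Dini) regularity of $\psi_2$, which your construction does not establish and which would need a further bootstrap on the Volterra equation. The same issue resurfaces in the continuation step: you state that the local existence time depends on $\|\tilde\varphi\|_{C^2}$, so restarting at $t_0\uparrow T$ requires a uniform a priori bound on $u_{xx}(\cdot,t_0)$ up to the free boundary --- exactly the quantity your listed bounds (on $u$, $u_x$, $s$, $s^\prime$, $\psi_2$) do not control, and exactly what the paper's Lemma~\ref{lem7} supplies for the transformed problem, where it is a bound on the first derivative $\tilde u_x$. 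A secondary flaw: your appeal to Hopf's lemma via an ``interior-ball property along the $C^1$ lateral boundary'' is not justified, since a $C^1$ curve need not admit interior tangent balls; the paper's Lemma~\ref{lem1} instead argues directly at a boundary maximum using only the existence of $s^\prime(t_0)$ and a one-sided directional derivative. The maximum-principle bounds you want are correct, but this is the argument you should use to get them.
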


In order to prove this theorem we introduce an auxiliary free
boundary value problem (see Problem~$\tilde{P}$ in Section~3) and
analyze local and global in time solvability of the resulting pair of
free boundary value problems. In Lemma~\ref{lem3} it is shown that
every solution of the main Problem $P$ ((\ref{1})--(\ref{5}))
generates a solution of the auxiliary problem and vice versa.
Existence and uniqueness of local solutions of the auxiliary problem
are proved  by deriving and studying an equivalent system of
nonlinear integral equations (see Lemma~\ref{lem4} and
Lemma~\ref{lem5}). In Lemma~\ref{lem7} we obtain a priori estimates
for the local solutions of the auxiliary problem by applying an
appropriate maximum principle (see Lemma~\ref{lem1} and
Lemma~\ref{lem2}) to the solutions of the main problem. Finally, we
prove existence of global solutions for both the main and the
auxiliary problems  by using the corresponding a priori estimates
obtained in Lemma~\ref{lem7}. Some of these results are announced
without proofs in \cite{R2}.

\section{Preliminary results}

Throughout the paper we assume that the functions $f $ and $\varphi $
satisfy the conditions~(\ref{6}). \vspace{2mm}

{\bf Definition 1.} We say that a pair of functions $(u (x,t), s(t))$
is a solution of Problem~$P$ for $t\in [0,T), \; T \leq \infty, $ if

(i) $u (x,t) $ is defined, continuous and has continuous partial
derivatives $u_x, \, u_t, \, u_{xx} $ in the domain $D_T =\{(x,t): \;
0 \leq x \leq s(t), \; 0\leq t<T \}; $

(ii) the equation (\ref{1}) is satisfied for $t<T;$

(iii) $s(t) \in C^1 ([0,T)); $

(iv) the conditions (\ref{2})--(\ref{5}) hold for $t\in
[0,T).$\vspace{2mm}

\begin{Lemma}(Maximum Principle)
\label{lem1} Let $ \lambda = const > 0, \; s(t) \in C^1 ( [0,T]),
 $ and let  $u(x,t) $ be defined and continuous in the domain
$\overline{D}_T = \{(x,t): \; 0 \leq x \leq s(t), \, 0 \leq t \leq
T\},$ have continuous partial derivatives $u_x, u_{xx} $ for $ 0 < x
\leq s(t), \, 0 < t \leq T, $  and have a continuous partial
derivative $u_t$ for $\, 0 < x < s(t), \, 0 < t < T. $ Suppose that
\begin{equation}
\label{1.1} - \frac{\partial u}{\partial t} + \frac{\partial^2
u}{\partial x^2} - \lambda u \geq 0 \quad \text{for} \; \; 0 < x <
s(t), \; 0 < t < T,
\end{equation}
and
\begin{equation}
\label{1.2} \frac{\partial u}{\partial x} (s(t),t) \leq 0 \quad
\text{for} \;\; 0 \leq t \leq T.
\end{equation}
If $M := \max \{u(x,t):  (x,t) \in \overline{D}_T \} > 0,$ then
$u(x,t)$ attains its maximum only on the union of the segments
$\{(0,t):  t \in [0,T]\}$ and $\{(x,0):  x\in [0, s(0)]\}.$
\end{Lemma}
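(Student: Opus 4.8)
The plan is a two‑stage argument: first exclude a maximum in the ``parabolic interior'' of $\overline{D}_T$, and then exclude the moving boundary $\{x=s(t):\,0<t\le T\}$ by exploiting the one–sided inequality \eqref{1.2}; the two segments in the statement are exactly what is left.

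\emph{Stage 1 (no interior maximum).} Suppose $u(x_0,t_0)=M$ with $0<x_0<s(t_0)$ and $0<t_0<T$. At such a point $u_t(x_0,t_0)=0$ and $u_{xx}(x_0,t_0)\le 0$, so the left‑hand side of \eqref{1.1} equals $u_{xx}(x_0,t_0)-\lambda M\le-\lambda M<0$, contradicting \eqref{1.1}; it is precisely the sign $\lambda>0$ of the zero‑order term that is used here, and no barrier is needed. The remaining ``top'' case $t_0=T$, $0<x_0<s(T)$, where $u_t$ is not assumed continuous, is reduced to this by a routine limiting step: applying the mean value theorem to $t\mapsto u(x_0,t)$ near $t=T$ produces times $\tau_k\uparrow T$ with $u_t(x_0,\tau_k)\ge 0$, and then \eqref{1.1} at $(x_0,\tau_k)$ together with the assumed continuity of $u_{xx}$ up to $t=T$ forces $u_{xx}(x_0,T)\ge\lambda M>0$, again contradicting $u_{xx}(x_0,T)\le 0$. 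Hence $u<M$ at every point of $\overline{D}_T$ with $0<x<s(t)$ and $0<t\le T$.

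\emph{Stage 2 (no maximum on the free boundary for $t>0$).} Suppose $u(s(t_1),t_1)=M$ for some $t_1\in(0,T]$; I will contradict \eqref{1.2}. Flatten the free boundary by setting $y=x-s(t)$ and $v(y,t)=u(y+s(t),t)$. A direct computation turns \eqref{1.1} into
\[
v_t-v_{yy}-s'(t)\,v_y+\lambda v\le 0,\qquad -s(t)<y<0,\ 0<t<T,
\]
a linear parabolic inequality whose coefficients are bounded since $s'\in C([0,T])$; moreover \eqref{1.2} becomes $v_y(0,t)\le 0$, while $v\le M$, $v(0,t_1)=M$, and Stage 1 gives $v<M$ on $\{-s(t)<y<0,\ 0<t\le T\}$. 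Since the lateral boundary $\{y=0\}$ is now flat, the Hopf boundary point lemma for parabolic operators applies at $(0,t_1)$: one takes a small space–time region internally tangent to $\{y=0\}$ at $(0,t_1)$ and contained in $\{y<0\}$, together with an exponential (Gaussian‑type) barrier $h$ whose parameter is chosen large — using the bound on $s'$ — so that $h$ is a subsolution of the flattened operator and vanishes on the outer boundary of the region; comparing $v-M+\varepsilon h$ with $0$ and invoking the weak maximum principle (essentially the computation of Stage 1) yields $v-M+\varepsilon h\le 0$ on the region with equality at $(0,t_1)$. Because $v-M<0$ in the interior, the Hopf estimate makes the outward normal derivative there strictly positive, i.e. $v_y(0,t_1)>0$, that is $u_x(s(t_1),t_1)>0$, which contradicts \eqref{1.2}. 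The case $t_1=T$ (and the corner $(s(T),T)$), where $u_t$ again is not known to be continuous, is handled by the standard parabolic corner variant of this construction combined with the mean‑value limiting device of Stage 1.

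Combining the two stages, $M$ can be attained neither in the parabolic interior, nor on the top $\{0<x<s(T),\ t=T\}$, nor on the free boundary for $t>0$; since these are exactly the points of $\overline{D}_T$ with $x>0$ and $t>0$, the maximum is attained only on $\{(0,t):0\le t\le T\}\cup\{(x,0):0\le x\le s(0)\}$, which is the assertion. The crux of the argument is Stage 2: condition \eqref{1.2} is a one‑sided Neumann condition on a curved, time‑dependent boundary, so no textbook maximum principle applies verbatim, and the point of the substitution $y=x-s(t)$ is to straighten that boundary — at the cost only of a first‑order term with the merely continuous but bounded coefficient $s'(t)$, which is precisely what the Hopf barrier construction tolerates.
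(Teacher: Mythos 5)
Your argument is correct in substance but follows a genuinely different, and much heavier, route than the paper's. Stage 1 coincides with the paper's interior case (the paper uses the one-sided difference quotient in $t$ together with the mean value theorem uniformly for $t_0\le T$, which subsumes your separate treatment of the top). The real divergence is Stage 2. The paper neither flattens the boundary nor invokes any Hopf lemma: at a boundary maximum $(s(t_0),t_0)$ it differentiates along a direction $\vec{\ell}=(\alpha,\beta)$ with $\alpha,\beta>0$ pointing backward into $D_T$ (such a direction exists because $s'(t_0)$ exists), applies \eqref{1.1} along that segment, and passes to the limit using the assumed continuity of $u_x,u_{xx}$ up to $x=s(t)$ to obtain $0\le \alpha u_x(s(t_0),t_0)+\beta\bigl(u_{xx}(s(t_0),t_0)-\lambda M\bigr)$; combined with \eqref{1.2} this forces $u_{xx}(s(t_0),t_0)\ge\lambda M>0$, hence $u_x(s(t_0)-h,t_0)<0$ and $u(s(t_0)-h,t_0)>M$ for small $h>0$ --- a contradiction. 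This is completely elementary, needs no barrier, and works verbatim at $t_0=T$. It exploits precisely the two features your Hopf construction is built to circumvent: $u_{xx}$ is hypothesized continuous up to the lateral boundary, and the zero-order term delivers the strict inequality $\lambda M>0$ at the boundary point itself.

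The one place where your write-up is genuinely thin is the case $t_1=T$ of Stage 2. The textbook parabolic boundary point lemma at a lateral point requires an interior tangent ball, contained in the domain, whose radius to the touching point is not parallel to the $t$-axis; at $(0,T)$ in flattened coordinates any such ball protrudes into $t>T$, where $v$ is undefined. One must instead use a paraboloid-type region lying in $\{t<T\}$, and the accompanying weak maximum principle must then exclude maxima on the open top $\{t=T\}$ --- which, since $u_t$ is not assumed continuous there, again requires the one-sided difference-quotient device of Stage 1. None of this is fatal, but ``the standard parabolic corner variant'' is doing real work that you have not written out, whereas the paper's directional-derivative argument disposes of the corner with no extra effort.
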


\begin{proof} To the contrary, assume that  $u(x,t) $ attains its maximum $M>0 $ at a point $(x_0,
t_0) $ with $x_0 >0,  \, t_0 >0. $

(a) Suppose $ x_0 <s(t_0). $ Obviously,  $ u_x (x_0,t_0) =0. $  On
the other hand, in view of (\ref{1.1}), we have
$$  0 \leq \frac{1}{h} \left (u(x_0,t_0) -u(x_0, t_0 -h)\right ) = u_t (x_0, t_h) \leq
u_{xx} (x_0, t_h) -\lambda u (x_0, t_h),
$$
with $ 0<h <t_0 $ and   $ t_0-h < t_h < t_0. $ Therefore, letting $
h\to 0,$ we get $ u_{xx} (x_0, t_0) \geq \lambda M >0. $ But then the
function of one variable $ u(x,t_0) $ has a strict local minimum at
$x=x_0, $ which is impossible.

(b) Assume that $x_0 = s(t_0).$ Since $ s^\prime (t_0 ) $ exists,
there is a unit vector $\vec{\ell} = (\alpha, \beta) $ with $\alpha >
0 $ and $\beta > 0$ such that the segment $\{(s(t_0)- \alpha h, t_0 -
\beta h), \; h \in (0, \varepsilon) \}$ is in the interior of $D_T $
for sufficiently small $\varepsilon >0.$ Then
$$
0 \leq \frac{1}{h} \left (u(s(t_0), t_0) - u(s(t_0)- \alpha h, t_0 -
\beta h) \right )= \frac{\partial u}{\partial \vec{\ell}} (x_h, t_h),
$$
where $x_h = s(t_0) - \alpha \theta h, \; t_h = t_0 - \beta \theta
h,\; \theta = \theta (h) \in (0,1).$ Therefore, by (\ref{1.1}) it
follows
$$
0 \leq \alpha u_x (x_h, t_h) + \beta u_t (x_h, t_h) \leq \alpha u_x
(x_h, t_h) + \beta ( u_{xx} (x_h, t_h)- \lambda u(x_h, t_h)).
$$
Passing to a limit as $h \to 0$ we get
$$
0\leq \alpha u_x (s(t_0), t_0) + \beta ( u_{xx} (s(t_0), t_0)-
\lambda u(s(t_0), t_0)).
$$
By (\ref{1.2}) we have $u_x (s(t_0), t_0) \leq 0. $  Therefore, the
latter inequality yields $u_{xx} (s(t_0), t_0)\geq \lambda M
>0,$ which implies that $u_x (s(t_0) - h, t_0) < 0 $ for all
sufficiently small $h>0, $  say $h \in (0, \delta).$  But then it
follows that $u(s(t_0)- h, t_0) >u(s(t_0), t_0))=M $ for $h \in
(0,\delta),$ which  is impossible. This completes the proof.
\end{proof}

In view of (\ref{2}), the Maximum Principle  yields immediately the
following a priori estimates for $u(x,t) $ and  $s(t).$

\begin{Lemma}
\label{lem2} If a pair of functions $(u(x,t), s(t)) $ is a solution
of Problem~$P$ for $ 0 \leq  t < T <\infty, $ then
\begin{equation}
\label{1.7} 0 \leq u(x,t) \leq C_T, \quad  0 \leq x \leq s(t), \;\; 0
\leq t <T,
\end{equation}
\begin{equation}
\label{1.8} -\tilde{\sigma} s(t) \leq s^\prime (t) \leq
(C_T-\tilde{\sigma})s(t), \quad be^{-\tilde{\sigma}t} \leq s(t) \leq
be^{(C_T-\tilde{\sigma})t},
\end{equation}
where $b=s (0), \;   C_T = \max \left ( \sup_{[0,T)} f(t), \;
\sup_{[0,b]} \varphi (x) \right ).$
\end{Lemma}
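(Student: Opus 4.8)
The plan is to apply the Maximum Principle (Lemma~\ref{lem1}) to both $u$ and $-u$ to obtain (\ref{1.7}), and then to read off (\ref{1.8}) from the nonlocal relation (\ref{2}) by an elementary integration.

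First, fix $T'\in(0,T)$ and work on the compact set $\overline{D}_{T'}\subset D_T$. Since $(u,s)$ solves Problem~$P$, on $\overline{D}_{T'}$ the function $u$ has precisely the regularity required in Lemma~\ref{lem1}; equation (\ref{1}) gives $-u_t+u_{xx}-\lambda u=0$, so (\ref{1.1}) holds, and (\ref{5}) gives $u_x(s(t),t)=0$, so (\ref{1.2}) holds. As $M:=\max_{\overline{D}_{T'}}u\ge u(0,0)=f(0)>0$, Lemma~\ref{lem1} forces the maximum to be attained on $\{(0,t):0\le t\le T'\}\cup\{(x,0):0\le x\le b\}$, where $u$ equals $f$ or $\varphi$; hence $M\le\max\!\big(\sup_{[0,T']}f,\ \sup_{[0,b]}\varphi\big)\le C_T$. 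Since $T'<T$ was arbitrary, $u\le C_T$ on $D_T$; here $C_T<\infty$ because $f\in C^1([0,\infty))$ is bounded on the compact interval $[0,T]$ and $\varphi\in C([0,b])$. For the lower bound I would run the same argument with $v:=-u$: by (\ref{1}) and (\ref{5}), $-v_t+v_{xx}-\lambda v=0$ and $v_x(s(t),t)=0$, so Lemma~\ref{lem1} applies to $v$ as well. Either $\max_{\overline{D}_{T'}}v\le0$, which already gives $u\ge0$, or $\max_{\overline{D}_{T'}}v>0$, in which case it would be attained at a point where $v=-f<0$ or $v=-\varphi<0$ — a contradiction. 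Letting $T'\uparrow T$ gives $u\ge0$ on $D_T$, which completes (\ref{1.7}).

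Next, inserting $0\le u(x,t)\le C_T$ into (\ref{2}) and integrating in $x$ over $(0,s(t))$ — using that $s(t)>0$, which is part of being a solution of Problem~$P$ — yields $-\tilde\sigma\,s(t)\le s'(t)\le(C_T-\tilde\sigma)\,s(t)$, the first assertion of (\ref{1.8}). Since $s\in C^1([0,T))$ and $s>0$, the function $\log s$ is $C^1$ with $(\log s)'=s'(t)/s(t)\in[-\tilde\sigma,\,C_T-\tilde\sigma]$; integrating from $0$ to $t$ with $s(0)=b$ gives $-\tilde\sigma t\le\log\!\big(s(t)/b\big)\le(C_T-\tilde\sigma)t$, i.e. $b\,e^{-\tilde\sigma t}\le s(t)\le b\,e^{(C_T-\tilde\sigma)t}$.

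I do not expect a genuine obstacle here: the argument is essentially bookkeeping once Lemma~\ref{lem1} is in place. The only points requiring a little care are checking that a Problem~$P$ solution really meets the hypotheses of Lemma~\ref{lem1} — in particular the one-sided condition (\ref{1.2}), supplied by (\ref{5}), and the prescribed interior regularity of $u_t$ — and handling the half-open time interval $[0,T)$ by exhausting it with the closed subintervals $[0,T']$ and letting $T'\uparrow T$.
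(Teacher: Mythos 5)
Your proof is correct and follows exactly the route the paper intends: the paper states that Lemma~\ref{lem2} follows ``immediately'' from the Maximum Principle (Lemma~\ref{lem1}) together with (\ref{2}), and your write-up simply supplies the standard details --- applying Lemma~\ref{lem1} to $u$ and to $-u$ (both admissible since $u_x(s(t),t)=0$ gives the one-sided condition (\ref{1.2}) in either sign), exhausting $[0,T)$ by compact subintervals, and then integrating the resulting differential inequality for $s$. No gaps.
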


\section{Auxiliary free boundary
problem}

Next we consider the following {\em auxiliary free boundary value
problem.}\vspace{2mm}

 {\bf Problem
${\bf \tilde{P}}.$} Find $s(t)>0$ and $\tilde{u}(x,t) $ such that
\begin{equation}
\label{11}  \tilde{u}_t = \tilde{u}_{xx} , \quad 0 < x < s(t), \;\;
t>0;
\end{equation}
\begin{equation}
\label{12}  s^\prime (t) =   (f(t)- \tilde{\sigma}) s(t) +
e^{-\lambda t} \int_0^{s(t)} \left ( \int_0^x \tilde{u}(\xi,t) d\xi
 \right ) dx,   \quad t>0;
\end{equation}
\begin{equation}
\label{13} \tilde{u}_x (0, t) = \tilde{f}(t), \quad \tilde{f}(t) \in
C ([0,\infty));
\end{equation}
\begin{equation}
\label{14}  \tilde{u}(x,0) = \tilde{\varphi} (x), \;
\tilde{\varphi}(x) \in C^1 ([0,b]), \; s(0)= b, \;
\tilde{\varphi}^\prime (0) = \tilde{f}(0), \; \tilde{\varphi} (b) =0;
\end{equation}
\begin{equation}
\label{15} \tilde{u}(s(t), t) = 0, \quad t>0.
\end{equation}

{\bf Definition 2.} We say that a pair of functions $\tilde{u} (x,t)
$ and $s(t)$ is a solution of Problem~$\tilde{P}$ for $t\in [0,T), \;
T \leq \infty,$ if

(i) $\tilde{u} (x,t) $ is defined and continuous in the domain $D_T
=\{(x,t): \; 0 \leq x \leq s(t), \; 0\leq t<T \},$ has continuous
partial derivative $\tilde{u}_x $ in  $D_T,$ and has continuous
partial derivatives $\tilde{u}_t, \; \tilde{u}_{xx} $ for $ 0 < x <
s(t), \; 0< t<T; $

(ii) the equation (\ref{11}) is satisfied for $t\in (0,T);$

(iii) $s(t) \in C^1 ([0,T)); $

(iv) the conditions (\ref{12})--(\ref{15}) hold for $t\in
[0,T).$\vspace{2mm}

The next lemma gives the relation between Problem $P$ and Problem
$\tilde{P}.$
\begin{Lemma}
\label{lem3} Let  $f(t)$ and $\varphi (x)$ satisfy (\ref{6}),
$\lambda =const
>0, $ and let
\begin{equation}
\label{17} \tilde{f}(t) = \frac{d}{dt} \left (e^{\lambda t}f (t)
\right ), \quad \tilde{\varphi} (x)= \varphi^\prime (x).
\end{equation}

(a) If a pair of functions $ u(x,t)$ and $ s(t)$ is a solution of
Problem~P for $t\in [0,T),$ then the pair of functions $\tilde{u}
(x,t) = e^{\lambda t}  u_x (x,t)$ and $s(t)$ is a solution of Problem
$\tilde{P}$ for $t\in [0,T).$

(b) If a pair of functions $\tilde{u} (x,t) $ and $s(t)$ is a
solution of Problem $\tilde{P}$ for $t\in [0,T),$ then the pair of
functions $( u(x,t), s(t))$ with
\begin{equation}
\label{16} u(x,t) = f(t) + e^{-\lambda t}\int_0^x \tilde{u} (\xi,t) d
\xi
\end{equation}
is a solution of  Problem $P$ for $t\in [0,T).$
\end{Lemma}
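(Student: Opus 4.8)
The plan is to verify directly that the transformations (\ref{17}), (\ref{16}) carry solutions of one problem to solutions of the other, checking each defining condition in Definition~1 and Definition~2 in turn. The computations are elementary, but the bookkeeping must be done carefully because of the differing regularity requirements in the two definitions.

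For part (a), suppose $(u,s)$ solves Problem~$P$ on $[0,T)$ and set $\tilde u(x,t)=e^{\lambda t}u_x(x,t)$. First I would differentiate the PDE (\ref{1}) in $x$: since $u_x$ solves $(u_x)_t=(u_x)_{xx}-\lambda u_x$, the function $\tilde u=e^{\lambda t}u_x$ satisfies the pure heat equation (\ref{11}); the regularity of $\tilde u$ claimed in Definition~2(i) follows from the stated regularity of $u$ (note $u_x$ is continuous up to $x=s(t)$ and $u_{xx},u_{xt}$ exist in the interior, the latter because $u_t=u_{xx}-\lambda u$ and $u_{xx}$ is differentiable in $x$). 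Condition (\ref{15}) is exactly (\ref{5}). Condition (\ref{13}) follows by differentiating (\ref{1}) and using (\ref{3}): at $x=0$, $u_{xx}(0,t)=u_t(0,t)+\lambda u(0,t)=f'(t)+\lambda f(t)$, so $\tilde u_x(0,t)=e^{\lambda t}u_{xx}(0,t)=\frac{d}{dt}(e^{\lambda t}f(t))=\tilde f(t)$. The initial condition (\ref{14}) is immediate from (\ref{4}): $\tilde u(x,0)=u_x(x,0)=\varphi'(x)=\tilde\varphi(x)$, with $\tilde\varphi'(0)=\varphi''(0)$; and the compatibility relations $\tilde\varphi'(0)=\tilde f(0)$, $\tilde\varphi(b)=0$ come from $f'(0)=\varphi''(0)-\lambda\varphi(0)$ and $\varphi'(b)=0$ in (\ref{6}). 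Finally, to get (\ref{12}) I would rewrite the nonlocal condition (\ref{2}): since $u(x,t)=u(0,t)+\int_0^x u_\xi(\xi,t)\,d\xi = f(t)+e^{-\lambda t}\int_0^x\tilde u(\xi,t)\,d\xi$, substituting into (\ref{2}) and integrating gives $s'(t)=\int_0^{s(t)}(u(x,t)-\tilde\sigma)\,dx=(f(t)-\tilde\sigma)s(t)+e^{-\lambda t}\int_0^{s(t)}\!\big(\int_0^x\tilde u(\xi,t)\,d\xi\big)dx$, which is (\ref{12}).

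For part (b), the argument runs in reverse: given a solution $(\tilde u,s)$ of Problem~$\tilde P$, define $u$ by (\ref{16}). Then $u_x(x,t)=e^{-\lambda t}\tilde u(x,t)$, so $u_{xx}=e^{-\lambda t}\tilde u_x$ and $u_t=f'(t)+e^{-\lambda t}\int_0^x\tilde u_t(\xi,t)\,d\xi-\lambda e^{-\lambda t}\int_0^x\tilde u(\xi,t)\,d\xi$. Using (\ref{11}) the integral of $\tilde u_t$ equals $\int_0^x\tilde u_{\xi\xi}\,d\xi=\tilde u_x(x,t)-\tilde u_x(0,t)=\tilde u_x(x,t)-\tilde f(t)$, and since $f'(t)=\frac{d}{dt}(e^{-\lambda t}\cdot e^{\lambda t}f(t))$ one checks $f'(t)-e^{-\lambda t}\tilde f(t)=-\lambda f(t)$ from (\ref{17}); combining these yields $u_t=u_{xx}-\lambda u$, i.e.\ (\ref{1}). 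Conditions (\ref{3}) and (\ref{4}) follow from (\ref{16}) at $x=0$ and $t=0$ (using $\int_0^x\tilde\varphi\,d\xi=\int_0^x\varphi'\,d\xi=\varphi(x)-\varphi(0)=\varphi(x)-f(0)$, so $u(x,0)=\varphi(x)$), and (\ref{5}) is $u_x(s(t),t)=e^{-\lambda t}\tilde u(s(t),t)=0$ by (\ref{15}). Reversing the integration identity used in part (a) turns (\ref{12}) back into (\ref{2}). The regularity in Definition~1(i) requires a brief remark: $u_x=e^{-\lambda t}\tilde u$ and $u_{xx}=e^{-\lambda t}\tilde u_x$ inherit continuity from $\tilde u$, while continuity of $u_t$ in the interior follows from the identity $u_t=u_{xx}-\lambda u$ just established.

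I do not expect any serious obstacle here; the lemma is a change-of-variables verification. The one point needing mild care is matching the regularity hypotheses: in Definition~2 the derivative $\tilde u_x$ is assumed continuous up to the free boundary but $\tilde u_t,\tilde u_{xx}$ only in the interior, so in part (b) I must be sure that the formula (\ref{16}) really does deliver a $u$ with $u_x,u_t,u_{xx}$ continuous on the closed region $0\le x\le s(t)$ as Definition~1 demands — this uses continuity of $\tilde u_x$ up to $x=s(t)$ for $u_{xx}$, and then $u_t=u_{xx}-\lambda u$ for $u_t$. Likewise in part (a) one should note that differentiating (\ref{1}) in $x$ is legitimate because $u_{xx}$ is given to be continuously differentiable in $x$ on the relevant set. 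Beyond this, everything reduces to the two elementary identities: $u=f+e^{-\lambda t}\int_0^x\tilde u\,d\xi \iff \tilde u=e^{\lambda t}u_x$, and $\frac{d}{dt}(e^{\lambda t}f)=\tilde f$.
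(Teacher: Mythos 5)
Your strategy is the same as the paper's: both directions are direct verifications resting on the identities $\tilde u = e^{\lambda t}u_x$ and $\tilde f = \frac{d}{dt}(e^{\lambda t}f)$, and all of your algebra (the boundary condition (\ref{13}) at $x=0$, the compatibility relations in (\ref{14}), the passage between (\ref{2}) and (\ref{12}), and the cancellation giving $u_t=u_{xx}-\lambda u$ in part (b)) is correct and matches the paper. The trouble is that the two regularity points you wave through in your last paragraph are the only nontrivial content of the lemma, and your justifications for them do not stand. In part (a) you say differentiating (\ref{1}) in $x$ is legitimate ``because $u_{xx}$ is given to be continuously differentiable in $x$.'' It is not: Definition 1 supplies only continuity of $u_x,u_t,u_{xx}$ on $D_T$ and says nothing about $u_{xxx}$ or $u_{xt}$, and your argument for the existence of $u_{xt}$ (``because $u_t=u_{xx}-\lambda u$ and $u_{xx}$ is differentiable in $x$'') presupposes exactly the missing derivative. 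The missing ingredient is interior regularity for parabolic equations: a classical solution of $u_t=u_{xx}-\lambda u$ is $C^\infty$ in the open region $0<x<s(t)$, $0<t<T$ (the paper cites \cite[Ch.~3, Thm.~11]{F} for this); only then does $\tilde u=e^{\lambda t}u_x$ possess the interior derivatives $\tilde u_t,\tilde u_{xx}$ required by Definition 2 and satisfy the heat equation.

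In part (b) you differentiate under the integral sign, writing $\partial_t\int_0^x\tilde u(\xi,t)\,d\xi=\int_0^x\tilde u_t(\xi,t)\,d\xi$, but $\tilde u_t=\tilde u_{\xi\xi}$ is only assumed continuous in the open region and need not be bounded as $\xi\to 0$, so the standard interchange theorem does not apply and the right-hand side is a priori an improper integral. The paper closes this by truncation: it sets $u_n=f(t)+e^{-\lambda t}\int_{1/n}^x\tilde u\,d\xi$, computes $\partial_t u_n$ using $\int_{1/n}^x\tilde u_{\xi\xi}\,d\xi=\tilde u_x(x,t)-\tilde u_x(1/n,t)$, and invokes the continuity of $\tilde u_x$ up to $\xi=0$ (which Definition 2(i) does guarantee) to get uniform convergence of $\partial_t u_n$ on compact $t$-intervals, which is what legitimizes the formula for $u_t$. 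Your closing remark that ``continuity of $u_t$ follows from the identity $u_t=u_{xx}-\lambda u$ just established'' is circular without such an argument, since establishing that identity is precisely the step at issue. Both gaps are repairable by standard tools, but as written the proof omits the only two arguments the lemma actually requires.
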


\begin{proof}
(a) Notice that $u(x,t)$  is a $C^\infty$-function in the interior of
the domain $D_T$ due to general smoothness theorems (see \cite[Ch.3,
Thm. 11]{F}, and Corollary 2 there). Therefore, the function $
\tilde{u} (x,t)  $ has continuous partial derivatives $\tilde{u}_t,
\, \tilde{u}_{xx} $ for $0 < x < s(t), \, 0 < t < T,$ and it
satisfies the equation $\tilde{u}_t = \tilde{u}_{xx} $ in that
domain.

Letting $x\to 0 $ in the equation (\ref{1}), we obtain $$ u_t (0,t) =
f^\prime (t) = u_{xx} (0,t) - \lambda f(t). $$ Thus, $\tilde{u}_x
(0,t) = u_{xx} (0,t)e^{\lambda t} = [f^\prime (t) + \lambda f(t)
]e^{\lambda t}, $ i.e., (\ref{13}) holds with $ \tilde{f} (t) =
\frac{d}{dt} \left ( f(t)e^{\lambda t} \right ).$ Now one can readily
verify  that the pair of functions $\tilde{u} (x,t)= e^{\lambda t}
u_x (x,t) $ and $s(t)$ is a solution of Problem $\tilde{P}$ for $t\in
[0,T).$

(b) We check first that the function $u(x,t) $ given in (\ref{16})
satisfies the equation (\ref{1}). By (\ref{16}), we have $$ u_{xx}
(x,t) = e^{-\lambda t} \tilde{u}_x (x,t) . $$ In order to find and
justify a formula for  $u_t $ we set
$$ u_n (x,t) = f(t) +e^{-\lambda t}\int_{1/n}^x \tilde{u} (\xi,t) d
\xi, \quad n=1,2, \ldots. $$ Then $ u_n (x,t) \to u(x,t) $  as $n\to
\infty $ for  $(x,t) \in D_T.$ In view of (\ref{11}), we have
$$
\frac{\partial }{\partial t} (u_n (x,t)) = f^\prime (t) -\lambda
e^{-\lambda t}\int_{1/n}^x \tilde{u} (\xi,t) d \xi + e^{-\lambda
t}\int_{1/n}^x \tilde{u}_{\xi \xi} (\xi,t) d \xi.
$$
Since $ \int_{1/n}^x \tilde{u}_{\xi \xi} (\xi,t) d \xi = \tilde{u}_x
(x,t)- \tilde{u}_x (1/n,t), $ we get  as $ n \to \infty$
$$
\frac{\partial }{\partial t} (u_n (x,t)) \to  f^\prime (t)  - \lambda
e^{-\lambda t}\int_{0}^x \tilde{u} (\xi,t) d \xi +e^{-\lambda t}
(\tilde{u}_x (x,t)- \tilde{u}_x (0,t))
$$
uniformly on any compact subinterval of $(0,T).$ Therefore, $u_t
(x,t)$ exists, and  using (\ref{16}) and (\ref{13}) we obtain
$$ u_t (x,t) = f^\prime (t)  - \lambda
(u(x,t) - f(t)) +e^{-\lambda t} (\tilde{u}_x (x,t)- \tilde{f} (t)).
$$
Since $ \tilde{f} (t) =\frac{d}{dt} \left ( f(t)e^{\lambda t} \right
)$ it follows that $ u(x,t) $ satisfies the equation (\ref{1}). Now
one can easily see that the pair of functions $(u(x,t), s(t)) $ is a
solution of  Problem $P$ for $t\in [0,T).$
\end{proof}

In view of Lemma \ref{lem3}, Theorem~\ref{thm1} will be proved if we
show that the following statement holds.
\begin{Theorem}
\label{thm2} Suppose that
\begin{equation}
\label{26} f(t) \in C^1([0,\infty)), \quad f(t)> 0 \;\; \text{for} \;
t \geq 0, \quad \tilde{f}(t) \in C([0,\infty)),
\end{equation}
$$\quad \tilde{\varphi}
(x) \in C^1([0,b]), \quad  \quad \tilde{f} (0) =
\tilde{\varphi}^{\prime} (0), \quad \tilde{\varphi} (b) =0.
$$
Then Problem $\tilde{P}$ has a unique solution for  $0\leq t
<\infty.$
\end{Theorem}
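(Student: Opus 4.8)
The strategy is the standard one for free boundary problems of Stefan type (cf. \cite[Ch.~8]{F}, \cite{FR99}, \cite{CF01}): convert Problem $\tilde P$ into a system of nonlinear Volterra integral equations, solve that system locally in time by the contraction mapping principle, derive a priori bounds that remain finite on every bounded time interval, and then continue the local solution to all of $[0,\infty)$. For the reduction, I would fix a curve $s\in C^1([0,T])$ with $s(0)=b$ and $\dot s$ bounded, and first solve the \emph{linear} heat equation \eqref{11} in $D_T$ under the mixed data \eqref{13}--\eqref{15}. With $\Gamma(x,t)=(4\pi t)^{-1/2}e^{-x^2/4t}$ and the reflected kernel $N(x,\xi,t)=\Gamma(x-\xi,t)+\Gamma(x+\xi,t)$, which satisfies $\partial_x N(0,\xi,t)\equiv 0$, one represents
\[ \tilde u(x,t)=\int_0^b N(x,\xi,t)\tilde\varphi(\xi)\,d\xi-2\int_0^t\Gamma(x,t-\tau)\tilde f(\tau)\,d\tau-\int_0^t\frac{\partial N}{\partial\xi}\bigl(x,s(\tau),t-\tau\bigr)\psi(\tau)\,d\tau, \]
so that the heat equation and the Neumann condition at $x=0$ hold automatically; imposing the remaining condition $\tilde u(s(t),t)=0$ and invoking the jump relation for the double-layer heat potential converts it into a Volterra equation of the second kind for the density $\psi$, whose kernel is weakly singular of order $(t-\tau)^{-1/2}$ (here one uses $s\in C^1$, hence $s(t)-s(\tau)=O(t-\tau)$, and $s$ bounded below away from $0$ to dispose of the reflected part). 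That equation is uniquely solvable by successive approximations, giving $\psi=\psi[s]$ and hence $\tilde u=\tilde u[s]$ with the regularity required by Definition~2 once the compatibility relations $\tilde f(0)=\tilde\varphi'(0)$, $\tilde\varphi(b)=0$ from \eqref{26} are used. Substituting this into the integrated form of \eqref{12},
\[ s(t)=b+\int_0^t\Bigl[(f(\tau)-\tilde\sigma)s(\tau)+e^{-\lambda\tau}\int_0^{s(\tau)}\!\!\int_0^x\tilde u[s](\xi,\tau)\,d\xi\,dx\Bigr]d\tau=:(\mathcal{T}s)(t), \]
reduces Problem $\tilde P$ to the single fixed-point equation $s=\mathcal{T}s$ (equivalently, a coupled system for the pair $(\psi,s)$).

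For local existence and uniqueness I would work on a closed ball in $C^1([0,T])$ centered at the constant function $b$ and show that, for $T$ small, $\mathcal{T}$ maps the ball into itself and is a contraction there. The ingredients are the Lipschitz dependence of $\psi[s]$ — hence of $\tilde u[s]$ and of the double integral — on $s$ in the $C^1$ norm, which follows from the Volterra structure together with standard estimates for heat potentials over a $C^1$ curve, and the explicit smallness in $T$ produced by the outer time integral in $\mathcal{T}$ and by the weakly singular kernels. Banach's fixed-point theorem then yields a unique solution on some $[0,T_0)$, and uniqueness on the full interval of existence follows from the same Volterra/Gronwall estimate.

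For the a priori bounds, Lemma~\ref{lem3}(b) turns a solution $(\tilde u,s)$ of Problem $\tilde P$ on $[0,T)$ into a solution $(u,s)$ of Problem $P$ on $[0,T)$ via \eqref{16} with $\varphi(x)=f(0)+\int_0^x\tilde\varphi(\xi)\,d\xi$, so the maximum principle (Lemma~\ref{lem1}), in the form of Lemma~\ref{lem2}, gives $0\le u\le C_T$ together with $be^{-\tilde\sigma t}\le s(t)\le be^{(C_T-\tilde\sigma)t}$ and $-\tilde\sigma s(t)\le s'(t)\le(C_T-\tilde\sigma)s(t)$ on $[0,T]$, with constants depending only on $f,\tilde\varphi,\lambda,\tilde\sigma,b$. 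In particular the a priori dangerous nonlocal term in \eqref{12} is harmless, since by \eqref{16} one has $e^{-\lambda t}\int_0^{s(t)}\!\int_0^x\tilde u(\xi,t)\,d\xi\,dx=\int_0^{s(t)}(u(x,t)-f(t))\,dx$, which is bounded by $C_T\,s(t)$. Since the moving domain $\{0<x<s(t)\}$ is thereby under uniform control and $s$ stays positive, interior parabolic regularity keeps $\tilde u$ regular up to any finite time $T^*$ and the conditions \eqref{13}, \eqref{15} persist there; consequently, if $[0,T^*)$ were the maximal interval of existence with $T^*<\infty$, applying the local existence result with initial time $T^*$ (and initial data $\tilde u(\cdot,T^*)$) would extend the solution beyond $T^*$, a contradiction. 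Hence $T^*=\infty$, which is Theorem~\ref{thm2}; combined with Lemma~\ref{lem3} this also establishes Theorem~\ref{thm1}.

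The genuinely new difficulty — the reason this is not a mere transcription of the arguments in \cite{F}, \cite{FR99}, \cite{CF01} — is the mixed, Neumann-at-$x=0$ and Dirichlet-on-the-free-boundary, character of the boundary conditions. It forces the systematic use of the reflected kernel $N$ and care that \emph{every} potential, including the double-layer potential carried on the moving boundary, preserves the Neumann condition at $x=0$; and it removes the boundary sign information one normally exploits in maximum-principle estimates. Thus I expect the main obstacles to be (i) verifying the jump relations and the $(t-\tau)^{-1/2}$ weak singularity of the Volterra kernel in this mixed geometry, and (ii) extracting from Lemma~\ref{lem1}/\ref{lem2} an a priori control that is strong enough — and stable enough as $t\uparrow T^*$ — to close the continuation argument.
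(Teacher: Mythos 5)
Your reduction is a leguinely different (and in principle viable) route from the paper's: you determine $\tilde u$ from a prescribed curve $s$ via a double-layer potential whose density $\psi$ is fixed by the Dirichlet condition $\tilde u(s(t),t)=0$, and then close the loop with a fixed-point equation for $s$ alone, whereas the paper derives from Green's identity a representation whose free-boundary density is the unknown flux $v(t)=\tilde u_x(s(t),t)$ and runs the contraction on the coupled pair $(v,s')$ (the system \eqref{112}--\eqref{113}, Lemmas~\ref{lem4} and \ref{lem5}). Either way, the local theory is plausible, though you pass rather quickly over the one point the paper labors hardest on in Lemma~\ref{lem4}: showing that the potential-theoretic solution has $\tilde u_x$ continuous up to the \emph{corners} $(0,0)$ and $(b,0)$ of the parabolic boundary, which is exactly where the compatibility conditions $\tilde f(0)=\tilde\varphi'(0)$ and $v(0)=\tilde\varphi'(b)$ are consumed and which is not a standard interior or flat-boundary estimate.

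The genuine gap is in the continuation step. You argue that Lemma~\ref{lem2} controls $u$, $s$, $s'$ and the nonlocal term, and that ``interior parabolic regularity keeps $\tilde u$ regular up to any finite time $T^*$,'' so one can restart at $T^*$. This does not close. First, the solution is not defined at $T^*$; one must restart at times $t_0<T^*$ with data $\tilde\varphi_1=e^{-\lambda t_0}\tilde u(\cdot,t_0)$, $b_1=s(t_0)$, and the local existence time produced by the contraction argument degenerates as the norms of these data grow (in the paper this is made explicit by the monotone function $h$ in Lemma~\ref{lem5}(b)). Second, and decisively, the relevant norm is $\|\tilde\varphi_1'\|_{b_1}=e^{-\lambda t_0}\sup_x|\tilde u_x(x,t_0)|$ (equivalently a bound on $u_{xx}$ up to the moving boundary), and \emph{neither} the maximum-principle bound $0\le u\le C_T$ \emph{nor} interior regularity provides a bound on $\tilde u_x$ that is uniform up to the lateral boundaries and as $t_0\uparrow T^*$; a priori it could blow up while $\|u\|_\infty$ stays bounded, in which case the restart interval shrinks to zero and no contradiction is reached. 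This is precisely what the paper's Lemma~\ref{lem7} supplies: a separate a priori estimate $\Psi_T=\sup_{D_T}|\tilde u_x|<\infty$, obtained not from the maximum principle but from the Volterra equation \eqref{112} for $v(t)$ itself (splitting the time integral at $T-\delta$ and absorbing the short-time tail), after which the representation formula propagates the bound on $v$ to all of $D_T$. Your proposal contains no substitute for this estimate, so the passage from local to global existence is unjustified as written.
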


\section{System of integral equations}

In this section  Problem $\tilde{P}$ is transformed to an equivalent
problem of solving a system of nonlinear integral equations. We begin
with some preliminaries.

Consider the function
 \begin{equation}
 \label{e}
 K(x,t;\xi, \tau) = \frac{1}{2\sqrt{\pi}
\sqrt{t-\tau}} \exp \left ( {-\frac{(x-\xi)^2}{4(t-\tau)}}  \right ),
\quad \tau < t.
\end{equation}

We shall make use of the following elementary inequalities:
\begin{equation}
 \label{e1}
 \int_0^t \left |K_x(x,t;\xi, \tau) \right | d \tau = \frac{1}{\sqrt{\pi}}
\int_{\frac{|x-\xi|}{2\sqrt{t}}}^\infty e^{-z^2} dz \leq \frac{1}{2}
\end{equation}
(by performing the change of variable $z=
\frac{|x-\xi|}{2\sqrt{t-\tau}} \;$);
\begin{equation}
 \label{e2}
 \int_0^b  K(x,t;\xi, 0)  d \xi = \frac{1}{\sqrt{\pi}}
\int_{\frac{-x}{2\sqrt{t}}}^{\frac{b-x}{2\sqrt{t}}} e^{-z^2} dz \leq
1
\end{equation}
(by using  the change of variable $z= \frac{\xi - x}{2\sqrt{t}}\;);$
\begin{equation}
 \label{e3}
z^a e^{-\gamma z} \leq \left( \frac{a}{e \gamma}\right )^a \quad
\text{if} \quad z>0, \; a>0, \; \gamma >0.
\end{equation}

The next statement is a slight modification of Lemma 1 in
\cite[Ch.8]{F}), and its proof is the same.
\begin{Lemma}
\label{lemF} If $s(t) \in C^1 ([0,T]), \; g(t) \in C([0,T]) $ and
$0<t_0 <T,$ then
  $$\lim_{(x,t) \to (s(t_0),t_0)}  \int_0^t K_x (x,t;s(\tau),\tau)
g (\tau) d\tau = \frac{g(t_0)}{2}  +\int_0^t K_x
(s(t_0),t_0;s(\tau),\tau) g(\tau) d\tau,
$$
where in the limit we consider only points $(x,t) $ with $x<s(t).$
\end{Lemma}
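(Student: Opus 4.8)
The plan is to follow the classical scheme for jump relations of heat potentials, as in \cite[Ch.~8]{F}: reduce to the case of constant density by subtracting the value $g(t_0)$, treat the constant‑density integral by an explicit integration in the variable $\tau$, and control the remainder by a uniform bound on $\int_0^t|K_x(x,t;s(\tau),\tau)|\,d\tau$. Throughout I use that $s\in C^1([0,T])$ is Lipschitz, with constant $L:=\max_{[0,T]}|s'|$.

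\emph{Step 1: constant density.} For $0\le\tau<t$ set
$$N(x,t;\tau):=\frac{1}{\sqrt\pi}\int_{\frac{s(\tau)-x}{2\sqrt{t-\tau}}}^{\infty}e^{-z^2}\,dz .$$
A direct differentiation, using the explicit forms of $K$ and $K_x$, gives $\partial_\tau N(x,t;\tau)=-K_x(x,t;s(\tau),\tau)-s'(\tau)K(x,t;s(\tau),\tau)$. Both $|K_x(x,t;s(\tau),\tau)|$ and $|s'(\tau)|K(x,t;s(\tau),\tau)$ are integrable over $\tau\in(0,t)$ — for the former one bounds $|x-s(\tau)|\le|x-s(t)|+L(t-\tau)$ and invokes \eqref{e1}; for the latter one uses $K\le(2\sqrt\pi\sqrt{t-\tau})^{-1}$ — so integration from $0$ to $t$ yields
$$\int_0^t K_x(x,t;s(\tau),\tau)\,d\tau=N(x,t;0)-\lim_{\tau\to t^-}N(x,t;\tau)-\int_0^t s'(\tau)K(x,t;s(\tau),\tau)\,d\tau .$$
Here $N(x,t;0)=\tfrac1{\sqrt\pi}\int_{(s(0)-x)/(2\sqrt t)}^{\infty}e^{-z^2}dz$ is continuous in $(x,t)$ for $t>0$, and $(x,t)\mapsto\int_0^t s'(\tau)K(x,t;s(\tau),\tau)\,d\tau$ is continuous at $(s(t_0),t_0)$ (dominated convergence away from $\tau=t_0$, together with $\int_{t_0-\eta}^{t}K\,d\tau=O(\sqrt\eta)$). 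The middle term is the only discontinuous one: when $x<s(t)$ the quotient $\frac{s(\tau)-x}{2\sqrt{t-\tau}}$ tends to $+\infty$ as $\tau\to t^-$, so the limit is $0$; but at $(x,t)=(s(t_0),t_0)$ one has $\big|\frac{s(\tau)-s(t_0)}{2\sqrt{t_0-\tau}}\big|\le\frac L2\sqrt{t_0-\tau}\to0$, so the same limit equals $\tfrac12$. Writing the displayed identity both for a point $(x,t)$ with $x<s(t)$ and for the point $(s(t_0),t_0)$ and subtracting, the continuous terms cancel in the limit and we obtain
$$\lim_{\substack{(x,t)\to(s(t_0),t_0)\\ x<s(t)}}\ \int_0^t K_x(x,t;s(\tau),\tau)\,d\tau=\frac12+\int_0^{t_0}K_x(s(t_0),t_0;s(\tau),\tau)\,d\tau .$$

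\emph{Step 2: general $g$.} Write $g=g(t_0)+h$ with $h\in C([0,T])$ and $h(t_0)=0$. By Step 1 the contribution $g(t_0)\int_0^t K_x(x,t;s(\tau),\tau)\,d\tau$ converges to $\tfrac{g(t_0)}{2}+g(t_0)\int_0^{t_0}K_x(s(t_0),t_0;s(\tau),\tau)\,d\tau$, so it remains to show
$$\int_0^t K_x(x,t;s(\tau),\tau)h(\tau)\,d\tau\ \longrightarrow\ \int_0^{t_0}K_x(s(t_0),t_0;s(\tau),\tau)h(\tau)\,d\tau$$
as $(x,t)\to(s(t_0),t_0)$ with $x<s(t)$; adding this to the previous assertion reproduces the right‑hand side of the lemma. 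Given $\varepsilon>0$, choose $\delta>0$ with $|h(\tau)|<\varepsilon$ for $|\tau-t_0|<\delta$. On $[0,t_0-\delta]$ the time gap $t-\tau$ is bounded below, so $K_x(x,t;s(\tau),\tau)$ is continuous in $(x,t)$ uniformly in $\tau$, and that part of the integral converges to the corresponding part of the target. The contribution of the short interval near $t_0$ is at most $\varepsilon\int_0^t|K_x(x,t;s(\tau),\tau)|\,d\tau\le C\varepsilon$, where the uniform bound $\int_0^t|K_x(x,t;s(\tau),\tau)|\,d\tau\le C=C(T,L)$ again follows from \eqref{e1} and $|x-s(\tau)|\le|x-s(t)|+L(t-\tau)$; the corresponding tail of the target integral is $O(\varepsilon)$ as well. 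Letting $\varepsilon\to0$ finishes the proof.

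\emph{Main obstacle.} The heart of the matter is Step 1: recognizing that the entire $\tau$‑integral of $K_x$ coincides, up to the single boundary term $\lim_{\tau\to t^-}N$, with a function that is manifestly continuous in $(x,t)$, and that this boundary term jumps from $0$ to $\tfrac12$ precisely because the limit of $\frac{s(\tau)-x}{2\sqrt{t-\tau}}$ as $\tau\to t^-$ is $+\infty$ when $x<s(t)$ but $0$ at $x=s(t_0)$ (using only $s\in C^1$, not any singular cancellation). In Step 2 the one point requiring care is the $(x,t)$‑uniform integrability of $|K_x(x,t;s(\tau),\tau)|$ along the moving boundary $\xi=s(\tau)$, which is exactly where the Lipschitz bound on $s$ enters; everything else is a routine equicontinuity argument.
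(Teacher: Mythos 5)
Your proof is correct and follows the classical jump-relation argument for the derivative of a single-layer heat potential — essentially the proof of Lemma 1 in Friedman, Ch.~8, which the paper invokes verbatim ("its proof is the same") rather than reproducing. The only step stated a bit too quickly is the uniform bound on $\int_0^t|K_x(x,t;s(\tau),\tau)|\,d\tau$: since the exponent contains the moving point $s(\tau)$ rather than a fixed $\xi$, a direct appeal to (\ref{e1}) needs the standard comparison $(x-s(\tau))^2\ge\tfrac12(x-s(t))^2-L^2(t-\tau)^2$ first, but this is routine and does not affect the argument.
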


Next we derive a system of integral equations related to Problem
$\tilde{P}.$ Let $N(x,t;\xi,\tau)$ be the Neumann function for the
half-plane $x>0,$  i.e.,
$$ N(x,t;\xi,\tau) = K(x,t;\xi, \tau)+ K(-x,t;\xi, \tau).$$ Suppose
that the pair of functions $(\tilde{u}(x,t), s(t))$ is a solution of
Problem $\tilde{P}$. For $t>0, $ we integrate the identity
\begin{equation}
\label{GI} \frac{\partial}{\partial \xi} \left ( N \frac{\partial
\tilde{u}}{\partial \xi} - \frac{\partial N}{\partial \xi} \tilde{u}
\right ) =\frac{\partial}{\partial \tau } (N\tilde{u}), \quad
\tilde{u} = \tilde{u}(\xi, \tau),
\end{equation}
 over the domain $
\varepsilon \leq \tau \leq t- \varepsilon, \quad \delta \leq \xi \leq
s(\tau) - \delta, \quad \varepsilon =const
>0, \; \delta = const >0, $  and pass to limits, first  as
$ \delta \to 0, $ and then as $\varepsilon \to 0. $ Since $N_\xi
(x,t;0,\tau)=0$ and
$$
\lim_{\varepsilon \to 0} \int_0^{s(t-\varepsilon )}
N(x,t;\xi,t-\varepsilon ) \tilde{u} (\xi,t-\varepsilon) d\xi =
\tilde{u}(x,t),
$$
it follows, in view of (\ref{13}) and (\ref{14}), that
\begin{equation}
\label{101} \tilde{u}(x,t)  = \sum_{\nu=1}^5 J_\nu (x,t),
\end{equation}
where
\begin{equation}
\label{102} J_1 (x,t) =\int_0^t N(x,t;s(\tau),\tau) v (\tau) d\tau
\quad \quad \text{with} \quad v (t) := \tilde{u}_x(s(t),t),
\end{equation}
$$
J_2 (x,t) =  -\int_0^t N (x,t;0,\tau) \tilde{f}(\tau) d\tau, \quad
J_3 (x,t) =\int_0^b N(x,t;\xi,0) \tilde{\varphi} (\xi) d\xi
$$
and
\begin{equation}
\label{103} J_4 (x,t) = -\int_0^t N_\xi (x,t;s(\tau),\tau)
\tilde{u}(s(\tau), \tau) d\tau,
\end{equation}
$$
J_5 (x,t) = \int_0^t s^\prime (\tau) N(x,t;s(\tau),\tau)
\tilde{u}(s(\tau), \tau) d\tau.
$$
The condition  (\ref{15}) implies $J_4 (x,t)=0, \; J_5 (x,t)=0. $
Thus, the following integral representation holds:
\begin{equation}
\label{110} \tilde{u}(x,t)  = J_1(x,t) + J_2 (x,t) + J_3 (x,t).
\end{equation}

Next, in order to obtain an integral equation for $ v(t) =
\tilde{u}_x (s(t),t), $  we differentiate (\ref{110}) with respect to
$x$ and pass to a limit as $x \to s(t)-0 $ in the resulting identity.
In view of Lemma \ref{lemF}, it follows that
  $$\lim_{x\to s(t)-0} \frac{\partial J_1}{\partial x} (x,t) =
  \int_0^t N_x (s(t),t;s(\tau),\tau) v
(\tau) d\tau + \frac{v(t)}{2}. $$  It is easy to see that
$$
\lim_{x\to s(t)-0} \frac{\partial J_2}{\partial x} (x,t) = - \int_0^t
N_x (s(t),t;0,\tau) \tilde{f}(\tau) d\tau.
$$
Now, consider the Green function for the half--plane $x>0$
$$G(x,t;\xi,\tau) = K(x,t;\xi, \tau)- K(-x,t;\xi, \tau). $$ Since $N_x
= - G_\xi, $ an integration by parts leads to
\begin{equation}
\label{111} \frac{\partial J_3}{\partial x} (x,t) = -\int_0^b G_\xi
(x,t;\xi,0) \tilde{\varphi} (\xi) d\xi=\int_0^b G (x,t;\xi,0)
\tilde{\varphi}^\prime (\xi) d\xi
\end{equation}
because $G(x,t;0,0) =0 $ and $\tilde{\varphi} (b)=0.$ Therefore, it
follows that
$$
\lim_{x\to s(t)-0} \frac{\partial J_3}{\partial x} (x,t) =\int_0^b G
(s(t),t;\xi,0) \tilde{\varphi}^\prime (\xi) d\xi.
$$
Hence, for $t>0 $ the function $v(t) $ satisfies the integral
equation
\begin{equation}
\label{112}  v(t)  = 2\int_0^t N_x (s(t),t;s(\tau),\tau)v (\tau)
d\tau
\end{equation}
$$
 -2\int_0^t N_x (s(t),t;0,\tau)
\tilde{f}(\tau) d\tau + 2\int_0^b G (s(t),t;\xi,0)
\tilde{\varphi}^\prime (\xi) d\xi.
$$

On the other hand, from (\ref{12}) and (\ref{110}) it follows
\begin{equation}
\label{113}  s^\prime (t)=  (f(t)e^{-\lambda t}-\tilde{\sigma}) s(t)
+ e^{-\lambda t} \left ( \int_0^{s(t)} \int_0^x \int_0^t
N(\xi,t;s(\tau),\tau) v (\tau)  d\tau d\xi dx \right.
\end{equation}
$$\left.
-\int_0^{s(t)} \int_0^x \int_0^t N (\xi,t;0,\tau) \tilde{f}(\tau)
d\tau d\xi dx + \int_0^{s(t)}\int_0^x\int_0^b N(\xi,t;\eta,0)
\tilde{\varphi} (\eta) d\eta d\xi dx \right ).
$$
The system of nonlinear integral equations (\ref{112}) and
(\ref{113}) considered with $s(t) =b+ \int_0^t s^\prime (\tau )
d\tau$ is equivalent to Problem~$\tilde{P}, $ i.e., the following
statement holds.

\begin{Lemma}
\label{lem4} Problem $\tilde{P}$ for $t<T$ is equivalent to the
problem of finding a pair of continuous functions $(v(t),\, s^\prime
(t))$ on $ [0,T) $ which satisfies for $t>0$ the system of nonlinear
integral equations (\ref{112}) and (\ref{113}) considered with $s(t)
=b+ \int_0^t s^\prime (\tau ) d\tau$.
\end{Lemma}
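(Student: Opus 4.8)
The plan is to prove both implications. The implication from a solution of Problem~$\tilde P$ to a solution of the integral system is essentially the computation carried out just above the lemma: if $(\tilde u,s)$ solves Problem~$\tilde P$ on $[0,T)$ then $s\in C^1$ makes $s'$ continuous, $\tilde u_x\in C(D_T)$ makes $v(t):=\tilde u_x(s(t),t)$ continuous, and Green's identity (\ref{GI}) with the Neumann function $N$, together with (\ref{13})--(\ref{15}) (which annihilate $J_4,J_5$), the representation (\ref{110}), its $x$-differentiation, and Lemma~\ref{lemF} in the limit $x\to s(t)-0$, yields (\ref{112}); substituting (\ref{110}) into (\ref{12}) yields (\ref{113}). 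So all the work is in the reverse implication.

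For that one, suppose $(v,s')$ are continuous on $[0,T)$, satisfy (\ref{112}) and (\ref{113}) with $s(t)=b+\int_0^t s'(\tau)\,d\tau>0$, and note $s\in C^1([0,T))$, $s(0)=b$. I would \emph{define} $\tilde u$ to be the right-hand side of (\ref{110}), that is $\tilde u=J_1+J_2+J_3$ built from this $v$ and $s$, and then check the items of Definition~2 one by one. The interior regularity and the equation $\tilde u_t=\tilde u_{xx}$, the initial value $\tilde u(x,0)=\tilde\varphi(x)$, and the condition $\tilde u_x(0,t)=\tilde f(t)$ should follow from standard heat-potential facts: $J_1$ is a single-layer potential along the curve $x=s(\tau)$ (its reflected part smooth for $x>0,\,s(\tau)>0$), continuous across the curve with $\partial_x J_1$ extending continuously to $x=s(t)-0$; $J_2$ is a single-layer potential on $\{x=0\}$, where $N(x,t;0,\tau)=2K(x,t;0,\tau)$, so the jump relation together with $K_x(0,t;0,\tau)=0$ gives $\partial_x J_2(0^+,t)=\tilde f(t)$; and $J_3$ is smooth and solves the heat equation for $t>0$, is continuous up to $t=0$ with $J_3(x,0)=\tilde\varphi(x)$, and has $\partial_x J_3=\int_0^b G(x,t;\xi,0)\tilde\varphi'(\xi)\,d\xi$ as in (\ref{111}) (here $\tilde\varphi(b)=0$ and $\tilde f(0)=\tilde\varphi'(0)$ are used for behaviour at the corners). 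Combining the jump formula of Lemma~\ref{lemF} for $\partial_x J_1$ with equation (\ref{112}) then yields $\tilde u_x(s(t)-0,t)=v(t)$, so $v$ is indeed the boundary flux and (\ref{13}), (\ref{14}) hold.

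The hard part will be the free-boundary condition $\tilde u(s(t),t)=0$. The point is that the constructed $\tilde u$ has exactly the smoothness needed to rerun the Green's-identity argument on $D_T$, so the representation (\ref{101}) applies to it and reads $\tilde u=J_1+J_2+J_3+\hat J_4+\hat J_5$, where $\hat J_4,\hat J_5$ are the terms (\ref{103}) formed with the still-unknown function $w(\tau):=\tilde u(s(\tau),\tau)$; comparing with $\tilde u=J_1+J_2+J_3$ forces $\hat J_4+\hat J_5\equiv 0$ on $D_T$, i.e.
$$\int_0^t\left(s'(\tau)N(x,t;s(\tau),\tau)-N_\xi(x,t;s(\tau),\tau)\right)w(\tau)\,d\tau=0,\qquad 0\le x\le s(t).$$
Letting $x\to s(t)-0$, the $s'N$-term passes to the limit continuously while the $N_\xi$-term acquires a jump (the $\xi$-derivative analogue of Lemma~\ref{lemF}, which follows from it since $K_x=-K_\xi$), so $w$ satisfies a homogeneous Volterra equation of the second kind $w(t)=\int_0^t\mathcal K(t,\tau)\,w(\tau)\,d\tau$; since $s\in C^1$ gives $|s(t)-s(\tau)|\le C(t-\tau)$ and the reflected pieces of $N$ and $N_\xi$ are bounded, one has $|\mathcal K(t,\tau)|\le C(t-\tau)^{-1/2}$, a weakly singular kernel, and iterating once followed by a Gronwall estimate forces $w\equiv 0$ (consistently with $w(0)=\tilde\varphi(b)=0$). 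Hence (\ref{15}) holds, and, since $\tilde u=J_1+J_2+J_3$, the triple integrals in (\ref{113}) are exactly $e^{-\lambda t}\int_0^{s(t)}\int_0^x\tilde u(\xi,t)\,d\xi\,dx$, so (\ref{113}) reduces to (\ref{12}); thus all of Definition~2 holds, and the two correspondences are visibly mutually inverse.

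The genuinely delicate points are therefore pinning down the sign in the jump of the double-layer-type potential $\int_0^t N_\xi(x,t;s(\tau),\tau)w(\tau)\,d\tau$ across the moving boundary and verifying the bound $|\mathcal K(t,\tau)|\le C(t-\tau)^{-1/2}$ --- both of which reduce to variants of Lemma~\ref{lemF} and the elementary estimates (\ref{e1})--(\ref{e3}) --- after which the homogeneous weakly singular Volterra equation forces $w\equiv 0$ and the whole argument closes.
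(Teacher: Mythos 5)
Your proposal follows essentially the same route as the paper: define $\tilde u = J_1+J_2+J_3$ with $s(t)=b+\int_0^t s'(\tau)\,d\tau$, verify the items of Definition~2 via the potential-theoretic jump relations and Lemma~\ref{lemF}, and then eliminate the boundary trace $w(t)=\tilde u(s(t),t)$ through the homogeneous weakly singular Volterra equation of the second kind obtained by rerunning the Green's identity. The one place where you are noticeably lighter than the paper is the continuity of $\tilde u$ and $\tilde u_x$ up to the corner points $(0,0)$ and $(b,0)$ (the paper's longest computation, done by a subsequence argument): there, besides $\tilde\varphi(b)=0$ and $\tilde f(0)=\tilde\varphi'(0)$, one also needs the matching condition $v(0)=\tilde\varphi'(b)$, which is not among the hypotheses and must first be extracted from (\ref{112}) by letting $t\to 0$.
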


\begin{proof}
We have already proved that if a pair $(\tilde{u}(x,t), s(t))$ is a
solution of Problem~$\tilde{P}$ for $t<T$, then the pair of
continuous functions $v(t)= \tilde{u}_x (s(t),t) $ and $ s^\prime
(t), \; t\in [0,T), $ satisfies for $t>0$ the system of nonlinear
integral equations (\ref{112}) and (\ref{113}) considered with $s(t)
=b+ \int_0^t s^\prime (\tau ) d\tau.$

Conversely, suppose that a pair of continuous functions $v(t) $ and $
s^\prime (t), \; t\in [0,T),  $ satisfies for $t>0$ the system of
integral equations (\ref{112}) and (\ref{113}). Set
\begin{equation}
\label{120} \displaystyle \tilde{u}(x,t)= \begin{cases}
\sum_{\nu=1}^3 J_\nu (x,t) &
\quad  \text{for}   \;\; 0\leq x \leq s(t), \; 0<t<T, \\
\tilde{\varphi} (x) &\quad  \text{for}  \;\; 0\leq x \leq b, \quad \;
t=0,
\end{cases}
\end{equation}
where $J_\nu (x,t), \, \nu =1,2,3 $ are given by (\ref{102}) and
$s(t) = b+ \int_0^t s^\prime (\tau) d\tau.$ We shall prove that the
pair of functions $(\tilde{u}(x,t), s(t))$ form a solution of
Problem~$\tilde{P}$ for $t<T.$

First we show that the function $\tilde{u}(x,t)$ is continuous in the
domain $D_T.$ Indeed, since the integrands in $J_1 (x,t) $ and $J_2
(x,t)$ are dominated by a multiple of $(t-\tau)^{-1/2},$ we have
$$
\lim_{(x,t) \to (x_0,0)} J_\nu (x,t)= 0, \quad \nu =1,2, \quad  x_0
\in [0,b].
$$
So, it remains to show that $ J_3 (x,t) \to \tilde{\varphi} (x_0) $
as
 $(x,t) \to (x_0,0), \;x_0\in [0,b].$
Performing the change of variable $z = (\xi \pm x)/2\sqrt{t},$ we
obtain $J_3 (x,t)= J^1_3 (x,t)+J^2_3 (x,t), $ where
$$
J^1_3 (x,t)= \frac{1}{\sqrt{\pi}}
\int_{-\frac{x}{2\sqrt{t}}}^{\frac{b-x}{2\sqrt{t}}} e^{-z^2}
\tilde{\varphi} (x+ 2z\sqrt{t}) dz, \quad J^2_3 (x,t)=
\frac{1}{\sqrt{\pi}}
\int_{\frac{x}{2\sqrt{t}}}^{\frac{b+x}{2\sqrt{t}}} e^{-z^2}
\tilde{\varphi} (-x+ 2z\sqrt{t}) dz.
$$
Now, for every $x_0 \in (0,b), $ it follows that
$$
J^1_3 (x,t) \to \tilde{\varphi} (x_0) \quad \text{and} \quad  J^2_3
(x,t) \to 0 \quad \text{as} \;\; (x,t) \to (x_0,0)
$$
because $-x/2\sqrt{t} \to - \infty $ and  $(b\pm x)/2\sqrt{t} \to +
\infty. $

The corner points  $(0,0)$ and $(b,0)$ need a special consideration.
If $(x,t) \to (b,0),$ then $J^2_3 (x,t) \to 0$ by the same argument.
Since $\tilde{\varphi} (b)= 0, $ it follows that  $J^1_3 (x,t) \to 0
$ as well, so $J_3 (x,t) \to 0 = \tilde{\varphi} (b)$ as $(x,t) \to
(b,0).$

In order to show that $J_3 (x,t) \to \tilde{\varphi} (0)$ as $(x,t)
\to (0,0)$ we shall prove that $J_3 (x_n,t_n) \to \tilde{\varphi} (0)
$ for every sequence  $(x_n,t_n) \to (0,0).$ Since $J^1_3 (x,t) $ and
$J^2_3 (x,t) $  are bounded, the sequence $\{J_3 (x_n,t_n)\} $ is
bounded. Therefore, it is enough to show that every convergent
subsequence of the form $\{J_3 (x_{n_k},t_{n_k})\}$ has a limit equal
to $\tilde{\varphi} (0). $  We may assume that
$x_{n_k}/2\sqrt{t_{n_k}} \to a \in [0,\infty]$ (otherwise we may pass
to a subsequence of $(n_k)$). Then it follows that
$$
J_3 (x_{n_k},t_{n_k}) \to \frac{1}{\sqrt{\pi}} \int_{-a}^\infty
e^{-z^2} \tilde{\varphi} (0) dz +\frac{1}{\sqrt{\pi}} \int_{a}^\infty
e^{-z^2} \tilde{\varphi} (0) dz = \tilde{\varphi} (0).
$$
Thus, $J_3 (x,t) \to \tilde{\varphi} (0)$ as $(x,t) \to (0,0),$ which
completes the proof of continuity of $\tilde{u} (x,t) $ in the domain
$D_T. $

It is easy to see that each of the integrals $J_\nu (x,t) $ is a
$C^\infty $--function in the domain $ 0 < x < s(t), \; 0<t < T, $ and
satisfies the heat equation there. Thus, $\tilde{u} (x,t) $ satisfies
(\ref{11}) as well.

The functions $v(t) $ and $s^\prime (t)  $ are defined and continuous
on $[0,T) $ and satisfy the integral equations (\ref{112}) and
(\ref{113}) for $t\in (0,T). $
 Consider the limit  of the right-hand side of (\ref{112}) as
$t\to 0. $  It is easy to see that the first two integrals there
converge to zero. With $z = (\xi \pm s(t))/2\sqrt{t},$ the  integral
$\int_0^b G (s(t),t;\xi,0) \tilde{\varphi}^\prime (\xi) d\xi$ is
equal to
$$
\frac{1}{\sqrt{\pi}}
\int_{-\frac{s(t)}{2\sqrt{t}}}^{\frac{b-s(t)}{2\sqrt{t}}} e^{-z^2}
\tilde{\varphi}^\prime (s(t)+ 2z\sqrt{t}) dz - \frac{1}{\sqrt{\pi}}
\int_{\frac{s(t)}{2\sqrt{t}}}^{\frac{b+s(t)}{2\sqrt{t}}} e^{-z^2}
\tilde{\varphi}^\prime (-s(t)+ 2z\sqrt{t}) dz.
$$
As $t \to 0, $  the first integral in the above expression tends to
$\tilde{\varphi}(b)/2, $ while the second one tends to zero.
Therefore, by passing to limit as $t\to 0 $ in (\ref{112}) we obtain
\begin{equation}
\label{f} v(0) =  \tilde{\varphi}^\prime (b).
\end{equation}

Next we prove that  $\tilde{u}_x  (x,t) $ extends as a continuous
function on $ D_T. $ In view of (\ref{102}) and
(\ref{110})--(\ref{111}), we have
\begin{equation}
\label{f01} \tilde{u}_x  (x,t) = I_1 (x,t) + I_2 (x,t) + I_3 (x,t),
\quad (x,t) \in D_T^{\circ},
\end{equation}
where $D_T^{\circ} =\{(x,t): \; 0<x<s(t), \; 0<t<T\},$ and
\begin{equation}
\label{f02}  I_1 (x,t) = \int_0^t N_x (x,t;s(\tau),\tau) v(\tau)
d\tau,
\end{equation}
$$
I_2 (x,t) = - \int_0^t N_x (x,t;0,\tau) \tilde{f} (\tau) d\tau, \quad
I_3 (x,t) =  \int_0^b G (x,t;\xi,0) \tilde{\varphi}^\prime (\xi)
d\xi.
$$
We shall prove that
$$
(i)  \quad \tilde{u}_x  (x,t) \to \tilde{f} (t_0) \quad \text{as}
\quad (x,t) \to (0,t_0), \;\;(x,t) \in D_T^{\circ}, \;\;  0 < t_0< T;
$$
$$
(ii)  \quad \tilde{u}_x  (x,t) \to v(t_0) \quad \text{as} \quad (x,t)
\to (s(t_0),t_0), \;\; (x,t) \in D_T^{\circ},\;\;  0 < t_0< T;
$$
$$
(iii)  \quad \tilde{u}_x  (x,t) \to \tilde{\varphi}^\prime (x_0)
\quad \text{as} \quad (x,t) \to (x_0,0), \;\; (x,t) \in D_T^{\circ},
\;\;  0 \leq x_0 \leq b.
$$
Since the functions $\tilde{f} (t), \, \tilde{\varphi}^\prime (x), \,
v(t) $ are continuous and $\tilde{f} (0)= \tilde{\varphi}^\prime
(0),\;  \tilde{\varphi}^\prime (b)=v(0)$ (see (\ref{14}) and
(\ref{f})), the conditions $(i)-(iii)$ guarantee that $\tilde{u}_x $
extends as a continuous function on $D_T.$

First we prove (i). Taking into account that $N_x (0,t,\xi,\tau) = 0
$ and $G(0,t,\xi,\tau) = 0,$ it is easy to see that $I_\nu (x,t) \to
(0,t_0)$ for $ \nu=1,3 $ as $x\to +0, \, t\to t_0 \in (0,T).$ With
the change of variable $z= \frac{x}{2\sqrt{t-\tau}}$  the integral
$I_2 (x,t)$ becomes
$$
I_2 (x,t)= -\int_0^t N_x (x,t;0,\tau)\tilde{f} (\tau) d\tau =
 \frac{2}{\sqrt{\pi}}
\int_{\frac{x}{2\sqrt{t}}}^\infty e^{-z^2} \tilde{f} \left (t -
\frac{x^2}{4z^2}\right ) dz.
$$
As $ x \to +0, \; t\to t_0\in (0,T) $ the latter integral tends to
$\tilde{f} (t_0).$  Thus $(i)$ holds.

Next we prove (ii). One can easily see that $$I_\nu (x,t) \to I_\nu
(s(t_0),t_0)  \quad \text{as} \quad  (x,t) \to (s(t_0),t_0), \quad
\nu =2,3.$$ From Lemma~\ref{lemF} and (\ref{f02}) it follows that
$$
\lim_{(x,t) \to (s(t_0),t_0)} I_1 (x,t)= \frac{1}{2} v(t_0) + I_1
(s(t_0),t_0).
$$
Therefore, by (\ref{112}), we obtain
$$
\tilde{u}_x (x,t) \to \frac{1}{2} v(t_0) + \sum_{\nu=1}^3 I_\nu
(s(t_0),t_0) = v(t_0) \quad \text{as} \quad  (x,t) \to (s(t_0),t_0),
$$
i.e., $(ii)$ holds.

It is easy to verify $(iii)$ for $x_0 \in (0,b). $  However, it is
much more complicated to prove (iii) for $x_0 = 0 $ or $x_0 =b.$

Next we show that $(iii)$ holds for $x_0=b,$  i.e.,
\begin{equation}
\label{f1} \tilde{u}_x  (x,t) \to \tilde{\varphi}^\prime (b) \quad
\text{as} \quad (x,t) \to (b,0), \;\; (x,t) \in D_T^{\circ}.
\end{equation}
Let $\{(x_n, t_n)\}$ be an arbitrary sequence such that $(x_n, t_n)
\to (b,0), \; (x_n, t_n)\in D_T^{\circ}.$  In order to prove that
$\tilde{u}_x (x_n,t_n) \to \tilde{\varphi}^\prime (b)$ it is enough
to show that for every subsequence $\{(x_{n_k}, t_{n_k})\}$ there is
a sub-subsequence $\{(x_{n_{k_m}}, t_{n_{k_m}})\}$ (which we denote
for convenience by $\{(x_m, t_m)\} $) such that
$$
\tilde{u}_x  (x_m,t_m) \to \tilde{\varphi}^\prime (b) \quad \text{as}
\;\; m \to \infty.
$$
We may assume without loss of generality (otherwise one may pass to
an appropriate subsequence) that
\begin{equation}
\label{f2} \frac{b-x_m}{2 \sqrt{t_m}} \to \alpha \in [0,\infty]
\quad \text{as} \;\; m \to \infty.
\end{equation}
(Notice that  $x_m <  s(t_m) $  and
$$
\frac{b-s(t_m)}{2 \sqrt{t_m}} = \frac{s(0)-s(t_m)}{2 \sqrt{t_m}} \to
0 \quad \text{as} \;\; m \to \infty;
$$
therefore, every cluster point $\alpha $ in (\ref{f2}) is
nonnegative.)

In view of (\ref{f01}),  in order to evaluate  $\lim_{m\to \infty}
\tilde{u}_x (x_m, t_m) $ one needs to find  $\lim_{m\to \infty} I_\nu
(x_m, t_m), \; \nu=1,2,3. $  First, consider the case $\nu =1.$
 We have $ I_1 (x,t) =I_{1,1} (x,t) +I_{1,2}
(x,t),$ where
\begin{equation}
\label{f11} I_{1,1} (x,t)= \frac{1}{\sqrt{\pi}}\int_0^t
\frac{s(\tau)-x}{4(t-\tau)^{3/2}}
e^{-\frac{(x-s(\tau))^2}{4(t-\tau)}} v(\tau) d \tau,
\end{equation}
\begin{equation}
\label{f12}
 I_{1,2}(x,t)= -\frac{1}{\sqrt{\pi}}\int_0^t
\frac{s(\tau)+x}{4(t-\tau)^{3/2}}
e^{-\frac{(x+s(\tau))^2}{4(t-\tau)}} v(\tau) d \tau.
\end{equation}
One can easily see that $I_{1,2}(x,t) \to 0$ as $(x,t) \to (b,0)$
because $x+s(\tau) > b > 0 $  for $(x,\tau)$ close to $(b,0).$ On the
other hand,
$$
I_{1,1}(x_m,t_m) =
I^1_{1,1}(x_m,t_m)+I^2_{1,1}(x_m,t_m)+I^3_{1,1}(x_m,t_m),
$$
where
$$
I^1_{1,1}(x_m,t_m)= \frac{1}{\sqrt{\pi}}\int_0^{t_m}
\frac{s(\tau)-s(t_m)}{4(t_m-\tau)^{3/2}}
e^{-\frac{[x_m-s(\tau)]^2}{4(t_m-\tau)}} v(\tau) d \tau,
$$
$$
I^2_{1,1}(x_m,t_m)= \frac{1}{\sqrt{\pi}}\int_0^{t_m}
\frac{s(t_m)-x_m}{4(t_m-\tau)^{3/2}} \left [
e^{-\frac{[x_m-s(\tau)]^2}{4(t_m-\tau)}}-e^{-\frac{[x_m-s(t_m)]^2}{4(t_m-\tau)}}
 \right ]v(\tau) d \tau,
$$
$$
I^3_{1,1}(x_m,t_m)= \frac{1}{\sqrt{\pi}}\int_0^{t_m}
\frac{s(t_m)-x_m}{4(t_m-\tau)^{3/2}}
e^{-\frac{[x_m-s(t_m)]^2}{4(t_m-\tau)}} v(\tau)  d \tau.
$$

Since $s^\prime (t) $ is continuous,  the Mean Value Theorem implies
that $|s(t_m)- s(\tau)| \leq const \cdot |t_m - \tau |.$  Therefore,
the absolute value of the integrand of $I^1_{1,1}(x_m,t_m)$ does not
exceed $C/\sqrt{t_m - \tau},$ which leads to $I^1_{1,1}(x_m,t_m) \leq
C \sqrt{t_m} \to 0$  as $m \to \infty.$

Changing the variable in $I^3_{1,1}$ by $\frac{s(t_m) -
x_m}{2\sqrt{t_m - \tau}} = z, $ we obtain
$$
I^3_{1,1}(x_m,t_m)= \frac{1}{\sqrt{\pi}} \int_{\frac{s(t_m) -
x_m}{2\sqrt{t_m}}}^\infty e^{-z^2} v \left (t_m -\frac{(s(t_m) -
x_m)^2}{4z^2} \right ) dz \to v(0) \cdot \frac{1}{\sqrt{\pi}}
\int_\alpha^\infty e^{-z^2} dz.
$$

The expression in the square brackets in the integral $I^2_{1,1}$ can
be written as $ \exp \left (-\frac{(x_m-s(t_m))^2}{4(t_m-\tau)}
\right ) \left ( e^{g_m (\tau)}  - 1 \right ), $ where
$$
g_m (\tau) = \frac{s(t_m) - s(\tau)}{4(t_m - \tau)} \cdot ( s(t_m)
+s(\tau) - 2x_m )  \to 0  \quad \text{as} \;\; m \to \infty
$$
uniformly for $ \tau \in [0,t_m].$ Therefore, the same change of
variable as in $I^3_{1,1}$ shows that $I^2_{1,1}(x_m,t_m) \to 0 \, $
as $\, m\to \infty.$

Hence, we obtain
\begin{equation}
\label{f4} I_1 (x_m, t_m) \to v(0) \cdot \frac{1}{\sqrt{\pi}}
\int_\alpha^\infty e^{-z^2} dz \quad \text{as} \; \; m\to \infty.
\end{equation}

It is easy to see that
\begin{equation}
\label{f5} I_2 (x_m, t_m) \to 0 \quad \text{as} \;\; m \to \infty.
\end{equation}

Next we evaluate  $\lim_{m\to \infty} I_3 (x_m, t_m).$
 Since $ G (x,t;\xi,\tau) =K (x,t;\xi,\tau)- K
(-x,t;\xi,\tau),$ performing the change of variable $z = (\xi \mp
x)/2\sqrt{t}$ we obtain that $I_3 (x,t)= I_{3,1} (x,t)+I_{3,2} (x,t),
$ where
$$
I_{3,1} = \frac{1}{\sqrt{\pi}}
\int_{-\frac{x}{2\sqrt{t}}}^{\frac{b-x}{2\sqrt{t}}} e^{-z^2}
\tilde{\varphi}^\prime (x+ 2z\sqrt{t}) dz, \quad I_{3,2} =
\frac{1}{\sqrt{\pi}}
\int_{\frac{x}{2\sqrt{t}}}^{\frac{b+x}{2\sqrt{t}}} e^{-z^2}
\tilde{\varphi}^\prime (-x+ 2z\sqrt{t}) dz.
$$
Therefore, in view of (\ref{f2}), it follows that
$$
I_{3,1}(x_m, t_m) \to \tilde{\varphi}^\prime (b) \cdot
\frac{1}{\sqrt{\pi}} \int_{-\infty}^\alpha e^{-z^2} dz, \quad
I_{3,2}(x_m, t_m) \to 0 \quad \text{as} \; \; m\to \infty,
$$
which yields
\begin{equation}
\label{f6} I_3 (x_m, t_m) \to \tilde{\varphi}^\prime (b) \cdot
\frac{1}{\sqrt{\pi}} \int_{-\infty}^\alpha e^{-z^2} dz \quad
\text{as} \; \; m\to \infty.
\end{equation}

Hence, (\ref{f1}) follows from (\ref{f4})--(\ref{f6}) and (\ref{f}).

A similar argument proves that
$$
 \tilde{u}_x  (x,t) \to \tilde{\varphi}^\prime (0) \quad
\text{as} \quad (x,t) \to (0,0), \;\; (x,t) \in D_T^{\circ},
$$
which completes the proof of $(iii).$

In order to complete the proof of Lemma~\ref{lem4} it remains to show
that the condition (\ref{15}) holds, i.e., $\tilde{u} (s(t),t) = 0 $
for $ t \in [0,T).$ Since $\tilde{u} (x,t) $ satisfies
(\ref{11})--(\ref{14}) (as we proved above), by integrating the
identity (\ref{GI}) over the domain $ \varepsilon \leq \tau \leq t-
\varepsilon, \quad \delta \leq \xi \leq s(\tau) - \delta,\quad
\varepsilon, \delta >0,  $ and passing to limits, first  as $ \delta
\to 0 $ and then as $\varepsilon \to 0, $ we obtain the integral
representation (\ref{101}). Now, in view of (\ref{103}) and
(\ref{120}), it follows that
$$
-\int_0^t N_\xi (x,t;s(\tau), \tau ) g(\tau) d\tau + \int_0^t
s^\prime (\tau)N (x,t;s(\tau), \tau )  g(\tau) d\tau = 0,
$$
where $g(t)= \tilde{u} (s(t),t), \; 0 \leq t <T.$ Taking into account
that $N_\xi = - G_x $ and passing to a limit as $x\to s(t)-0,$ we
obtain by Lemma~\ref{lemF}
\begin{equation}
\label{130} \frac{g(t)}{2} + \int_0^t G_x (s(t),t;s(\tau), \tau )
 g(\tau) d\tau + \int_0^t s^\prime (\tau)N (s(t),t;s(\tau), \tau )  g(\tau)
d\tau = 0.
\end{equation}
We are going to explain that this integral equation for $g$ has only
the trivial solution $g(t ) \equiv 0.$ One can easily see that for
any $T_1 < T $ there is a constant $C>0 $ such that for $ \tau \in
[0,T_1]$
$$
 |G_x (s(t),t;s(\tau), \tau )| \leq \frac{C}{\sqrt{t-\tau}}, \quad
 |s^\prime (\tau) N (s(t),t;s(\tau), \tau )| \leq
 \frac{C}{\sqrt{t-\tau}} \,.
$$
Now, by (\ref{130}) it follows that
$$
\sup_{[0,t_1]} |g(t)|  \leq  8C\sqrt{t_1} \sup_{[0,t_1]}|g(t)|, \quad
t_1 \in (0,T_1).
$$
Choose $t_1$ so that $8C\sqrt{t_1}<1; $ then we have $g(t) = 0 $ for
$ 0\leq t \leq t_1. $

The same argument shows that if $g(t) = 0 $ for  $t\in [0,t_0],$ then
there is a $\delta>0 $ such that $g(t) = 0 $ for $t\in [0,t_0
+\delta].$ Hence, $g(t) \equiv 0  $  for $t \in [0,T), $ i.e.,
(\ref{15}) holds. This completes the proof of Lemma~\ref{lem4}.
\end{proof}

\section{Local existence-uniqueness}

We study the local existence--uniqueness properties of the system of
nonlinear integral equations (\ref{112}), (\ref{113}) by employing
the Banach Contraction Fixed Point Theorem.

Let $\varepsilon =const >0, $  and let $E$ be the space of all pairs
of continuous functions $(v(t),s^\prime(t) ),\; t \in
[0,\varepsilon].$ Equipped with the norm
$$ \|(v,s^\prime)\|_\varepsilon= \max \{\|v\|_\varepsilon, \|s^\prime\|_\varepsilon\},
\quad \text{where} \quad \|v\|_\varepsilon= \sup_{[0,\varepsilon]} |v
(t)|, \quad \|s^\prime\|_\varepsilon= \sup_{[0,\varepsilon]}
|s^\prime(t)|,$$ $E$ is a Banach space.

We fix a constant $T>0 $ and introduce the norms
$$
\|f\|_T = \sup_{[0,T]} |f(t)|, \quad \|\tilde{f}\|_T = \sup_{[0,T]}
|\tilde{f}(t)|, \quad  \|\tilde{\varphi}^\prime \|_b =\sup_{[0,b]}
|\tilde{\varphi}^\prime(x)|.
$$
In the following we may assume that $ \varepsilon < T. $

 Consider in $E$ the operator
\begin{equation}
\label{2.1} \Phi (v,s^\prime) = (A(v,s^\prime), B(v,s^\prime)),
\end{equation}
where
\begin{equation}
\label{2.2} A(v,s^\prime)(t) =2A_1(v,s^\prime)
(t)+2A_2(v,s^\prime)(t)+2A_3 (v,s^\prime)(t) \quad \text{for} \;\;
t>0
\end{equation}
with
$$A_1 (v,s^\prime) =
\int_0^t N_x (s(t),t;s(\tau),\tau)v (\tau) d\tau, \quad
 A_2 (v,s^\prime)= -\int_0^t N_x (s(t),t;0,\tau)
\tilde{f}(\tau) d\tau, $$
$$ A_3 (v,s^\prime) =\int_0^b G
(s(t),t;\xi,0) \tilde{\varphi}^\prime (\xi) d\xi,
$$
and
\begin{equation}
\label{2.3} B(v,s^\prime)(t) = \int_0^{s(t)} W(v,s^\prime) (x,t) dx,
\end{equation}
with
\begin{equation}
\label{2.4}  W(v,s^\prime)(x,t) = f(t) e^{-\lambda t} -\tilde{\sigma}
+e^{-\lambda t} \sum_{\nu =1}^3 \int_0^x J_\nu (v,s^\prime) (\xi, t)
d \xi,
\end{equation}
where $J_\nu $ are the integrals introduced in (\ref{102}).

In the above notations the system of integral equations (\ref{112})
and (\ref{113}) could be written as
\begin{equation}
\label{2.5} (v, s^\prime ) = \Phi (v, s^\prime ).
\end{equation}
Next we prove that locally (say, for $0 < t \leq \varepsilon $) the
equation (\ref{2.5}) has a unique solution by showing that for every
large enough $M>0$ there is an $\varepsilon
> 0 $ such that the operator $\Phi$ maps the closed ball ${\mathcal B}_M=
\{(v,s^\prime): \; \|(v,s^\prime)\|_\varepsilon \leq M \}$ into
itself, and its restriction on ${\mathcal B}_M$ is a contraction
mapping.

 We impose the following a priori conditions on $M$ and
$\varepsilon: $
\begin{equation}
\label{c1} M>1,  \qquad \varepsilon < \min \{1, b/(2M)\}.
\end{equation}
Then  $\|s^\prime \|_\varepsilon \leq M$ implies $|s(t) - b| \leq M
\varepsilon < b/2. $ Therefore,
\begin{equation}
\label{c2}    \|s^\prime\|_\varepsilon \leq M \;  \Rightarrow \; \;
b/2 \leq s(t) \leq  3b/2 \quad \text{for} \quad 0 \leq t \leq
\varepsilon.
\end{equation}
Straightforward computations show that the operator $\Phi $ maps
${\mathcal B}_M$ into itself if
\begin{equation}
\label{c3} M\geq  1+ (8+5b^2)\|\tilde{\varphi}^\prime \|_b + 4b^2
\|f\|_T + 4 b^2 \tilde{\sigma}
\end{equation}
and
\begin{equation}
\label{c4} \varepsilon \leq \left [ 4(M+4/b)^2 + (32/b)^2
\|\tilde{f}\|_T +  5b^2 M + 2 \|\tilde{f}\|_T \right ]^{-1}.
\end{equation}

In order to prove that the operator $\Phi : {\mathcal B}_M \to
{\mathcal B}_M $ is a contraction mapping we  investigate the
contraction properties of integral operators in
(\ref{2.2})--(\ref{2.4}). Since $A_1 (v,s^\prime) $ and $A_2
(v,s^\prime) $ are Voltera type integral operators, one can easily
prove that there exists  $\varepsilon >0$ such that for $t\in
[0,\varepsilon]$
\begin{equation} \label{c5} |A_\nu (v_1,s_1^\prime)(t)-A_\nu
(v_2,s_2^\prime)(t)|\leq \frac{1}{4} \|(v_1 - v_2, s_1^\prime -
s_2^\prime)\|_\varepsilon, \quad \nu=1,2,
\end{equation}
whenever $(v_1,s_1^\prime),\, (v_2,s_2^\prime) \in {\mathcal B}_M. $

Next we consider  $A_3 (v,s^\prime).$ Since $G(x,t;\xi,0) =
K(x,t;\xi,0)-K(-x,t;\xi,0),$ we have $ A_3 (v_1,s_1^\prime)(t)-A_3
(v_2,s_2^\prime)(t)=A_3^- (t)-A_3^+ (t), $ where
$$ A_3^\pm (t)= \frac{1}{2\sqrt{\pi t}} \int_0^b \left [
e^{-\frac{(s_1(t)\pm\xi)^2}{4t}} -e^{-\frac{(s_2(t)\pm \xi)^2}{4t}}
\right ] \tilde{\varphi}^\prime (\xi) d \xi.
$$

By the elementary inequality $|e^{-x_1} - e^{-x_2}|\leq |x_1 -x_2|
e^{ -\min(x_1,x_2)}, \; x_1,x_2 >0, $ the expression in the square
brackets in $A_3^+$  does not exceed by absolute value
$$
\frac{|s_1 (t)-s_2 (t)|}{4t} (s_1 (t)  +s_2 (t) +2 \xi) \exp  \left
(-\frac{1}{4t} \min [(s_1 (t)+\xi)^2,(s_2 (t)+\xi)^2] \right ) $$
$$\leq \frac{5b}{4} \, e^{-b^2/16t}  \cdot \|s_1^\prime -s_2^\prime\|_\varepsilon
$$
because $b/2 \leq s_i (t) \leq 3b/2, \; i=1,2 $ (see (\ref{c2})).
Therefore, taking into account that $e^{-b^2/16t} < 16t/b^2,$ we
obtain
$$
|A_3^+ (t) | \leq 10 \sqrt{t} \|\tilde{\varphi}^\prime\|_b
\|s_1^\prime -s_2^\prime\|_\varepsilon \leq \frac{1}{8} \|s_1^\prime
-s_2^\prime\|_\varepsilon \quad \text{if} \quad t\leq \varepsilon
\leq (80\|\tilde{\varphi}^\prime\|_b)^{-2}.
$$

In order to estimate $A_3^-$ we write it in the form $A_3^- (t)
=A_{3,1}^- (t) +A_{3,2}^- (t),$  where
$$A_{3,1}^-(t)=
\frac{1}{2\sqrt{\pi t}} \int_0^{b-\delta}
e^{-\frac{(s_1(t)-\xi)^2}{4t}} \left [1-\exp \left (
\frac{(s_1(t)-\xi)^2}{4t}-\frac{(s_2(t)-\xi)^2}{4t} \right ) \right ]
\tilde{\varphi}^\prime (\xi) d \xi,
$$
$$A_{3,2}^- (t)=
\frac{1}{2\sqrt{\pi t}} \int_{b-\delta}^b
e^{-\frac{(s_1(t)-\xi)^2}{4t}} \left [1-\exp \left (
\frac{(s_1(t)-\xi)^2}{4t}-\frac{(s_2(t)-\xi)^2}{4t} \right ) \right ]
\tilde{\varphi}^\prime (\xi) d \xi.
$$
and $  \delta = 2Mt^{1/4}.$

From (\ref{c2}) it follows that $|s_1 (t) + s_2 (t) - 2 \xi | \leq 3b
$ which implies
$$
\left |\frac{(s_1(t)-\xi)^2}{4t}-\frac{(s_2(t)-\xi)^2}{4t} \right | =
\frac{|s_1 (t)-s_2 (t)|}{4t}|s_1 (t) + s_2 (t) - 2 \xi | \leq
\frac{3b}{4} \cdot \|s_1^\prime - s_2^\prime\|_\varepsilon.
$$
Therefore, by the inequality $|e^x -1|\leq |x|e^{|x|},$ the
expression in the square brackets in $A_{3,1}^-(t)$ does not exceed
by absolute value $ (3b/4) \exp (3bM/2)\|s_1^\prime -
s_2^\prime\|_\varepsilon.$ Thus, it follows
$$
\left |A_{3,1}^-(t) \right | \leq \frac{3b}{4} \, e^{3bM/2}\,
\|\tilde{\varphi}^\prime \|_b \cdot  \|s_1^\prime -
s_2^\prime\|_\varepsilon \cdot \int_0^{b-\delta}
\frac{1}{2\sqrt{t}}\, e^{-\frac{(s_1(t)-\xi)^2}{4t}} d\xi.
$$

Performing the change of variable $z=\frac{\xi - s_1(t)}{2\sqrt{t}} $
in the latter integral, and  estimating from above the resulting
integral, we obtain
$$
\int_{-\frac{s_1 (t)}{2\sqrt{t}}}^{\frac{b-\delta-s_1
(t)}{2\sqrt{t}}} e^{-z^2} dz \leq \frac{b-\delta}{2\sqrt{t}}
e^{-\frac{(b-\delta-s_1 (t))^2}{4t}}\leq \frac{b}{2\sqrt{t}}
e^{-\frac{M^2}{4\sqrt{t}}} \leq 8b\cdot t^{1/2}
$$
because $$ \delta + (s_1 (t)-b) \geq 2M t^{1/4} -
\|s_1^\prime\|_\varepsilon \cdot t \geq 2M t^{1/4} - Mt \geq M
t^{1/4}, $$ and (by (\ref{e3}) and (\ref{c1}))  $\; t^{-1} \exp
(-M^2/4\sqrt{t}) \leq 16.$  Therefore,
$$
\left |A_{3,1}^-(t) \right | \leq 6b^2 \exp (3bM/2)
\|\tilde{\varphi}^\prime \|_b \cdot \|s_1^\prime -
s_2^\prime\|_\varepsilon \cdot \sqrt{t}.
$$

Next we estimate $A_{3,2}^-(t).$  If $\xi \in [b-\delta, b],$ then
(since $s_1(0)=s_2 (0)=b$)
$$
|s_1 (t) + s_2 (t) - 2 \xi| = |s_1(t)-s_1 (0) + s_2 (t)- s_2 (0) +
2(b-\xi)|
$$ $$
\leq (\|s_1^\prime \|_\varepsilon
 + \|s_2^\prime \|_\varepsilon)\cdot t + 2 \delta \leq 2M t + 4M t^{1/4}
\leq 6M t^{1/4},
$$
which implies
$$
\left |\frac{(s_1(t)-\xi)^2}{4t}-\frac{(s_2(t)-\xi)^2}{4t} \right | =
\frac{|s_1 (t)-s_2 (t)|}{4t}|s_1 (t) + s_2 (t) - 2 \xi | \leq
\frac{3}{2}M t^{1/4} \|s_1^\prime - s_2^\prime\|_\varepsilon.$$
Estimating  the expression in the square brackets in $A_{3,2}^-(t)$
as in the case of $A_{3,1}^-(t) $ and taking into account (\ref{e2}),
we obtain
$$
\left |A_{3,2}^-(t) \right | \leq \frac{3}{2} M t^{1/4}\exp
(3M^2)\|\tilde{\varphi}^\prime \|_b \|s_1^\prime -
s_2^\prime\|_\varepsilon.
$$
Thus,
$$
|A_3^- (t) | \leq \left [6b^2 \exp (3bM/2) + 3M\exp (3M^2) \right ]
\|\tilde{\varphi}^\prime \|_b \|s_1^\prime - s_2^\prime\|_\varepsilon
\cdot t^{1/4},
$$
which implies that
$$
|A_3^- (t) | \leq \frac{1}{8} \|s_1^\prime - s_2^\prime\|_\varepsilon
$$
$ \text{if} \qquad \qquad  t \leq \varepsilon \leq 8^{-4} \left [6b^2
\exp (3bM/2) + 3M\exp (3M^2) \right ]^{-4} \|\tilde{\varphi}^\prime
\|_b^{-4}. $\vspace{2mm}

Now, the estimates for $A_3^-(t)$ and $A_3^+ (t)$ imply that if
\begin{equation}
\label{c6} \varepsilon \leq \min \left \{80\|\tilde{\varphi}^\prime
\|_b)^{-2},\, 8^{-4} \left [6b^2 \exp (3bM/2) + 3M\exp (3M^2) \right
]^{-4} \|\tilde{\varphi}^\prime \|_b^{-4} \right \},
\end{equation}
then
\begin{equation}
\label{c7} |A_3 (v_1,s_1^\prime)(t)-A_3 (v_2,s_2^\prime)(t)| \leq
\frac{1}{4} \|s_1^\prime - s_2^\prime\|_\varepsilon.
\end{equation}

The estimates (\ref{c5}) hold if $\varepsilon $ satisfies
inequalities similar to (\ref{c4}) and (\ref{c6}). Moreover, one can
prove that the operator $B$ is a contraction in $\mathcal{B}_M $ if
$\varepsilon $ satisfies similar restrictions. We omit the details,
but it is important to note that the right-hand sides of (\ref{c4}),
(\ref{c6}) and the analogous inequalities (which guarantee that the
operator $\Phi$ is a contraction  on ${\mathcal B}_M$ with a
contraction coefficient $<1$) are given by expressions that decrease
if the parameters involved (such as $b, M, \|\tilde{\varphi}^\prime
\|_b, \|f\|_T, \|\tilde{f}\|_T $) increase.

Therefore, applying the Banach Contraction Fixed Point Theorem, we
obtain the following statement.

\begin{Lemma}
\label{lem5} (a) For each constant $M>1 $ which satisfies (\ref{c3})
there is a constant $\varepsilon >0 $ such that the system of
integral equations (\ref{112}) and (\ref{113}) (with $s(t) =b+
\int_0^t s^\prime (\tau ) d\tau$) has a unique solution $(v(t), \,
s^\prime (t)), \;  t\in [0,\varepsilon],$ such that $
\|v\|_\varepsilon \leq M$ and $\|s^\prime \|_\varepsilon \leq M.$

(b) The constant $\varepsilon $ may be chosen so that
\begin{equation}
\label{c14} \varepsilon =h(M,b, 1/b, \|f\|_T, \|\tilde{f}\|_T,
\|\tilde{\varphi}^\prime\|_b),
\end{equation}
where $ h(y_1,y_2,y_3,y_4,y_5,y_6 ), \; y_i >0, $  is a monotone
decreasing function with respect to each argument $y_i,
\,i=1,\ldots,6. $
\end{Lemma}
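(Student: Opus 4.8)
The plan is to verify the hypotheses of the Banach Contraction Fixed Point Theorem for the operator $\Phi$ of (\ref{2.1})--(\ref{2.4}) on the closed ball $\mathcal{B}_M\subset E$, and then to read off statements (a) and (b). Fix $M>1$ satisfying (\ref{c3}). The inclusion $\Phi(\mathcal{B}_M)\subseteq\mathcal{B}_M$ has already been reduced to the requirements (\ref{c1}) and (\ref{c4}) on $\varepsilon$, so it remains to make $\Phi$ a contraction. For this one needs, besides the Voltera-type estimates (\ref{c5}) for $A_1,A_2$ and the estimate (\ref{c6})--(\ref{c7}) for $A_3$, the entirely analogous bounds that make the operator $B$ of (\ref{2.3})--(\ref{2.4}) a contraction. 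I would obtain these exactly as for $A_3$: write $B(v_1,s_1^\prime)-B(v_2,s_2^\prime)$ as a finite sum, split each inner $\xi$-integral near the endpoint $x=b$, and estimate using the elementary inequalities (\ref{e1})--(\ref{e3}) and $b/2\le s(t)\le 3b/2$ from (\ref{c2}) --- the term $f(t)e^{-\lambda t}$ in (\ref{2.4}) contributing a factor $\|f\|_T$, and the $J_\nu$-terms contributing factors involving $\|\tilde f\|_T$ and $\|\tilde\varphi^\prime\|_b$. The upshot is that each of $A_1,A_2,A_3$ and each analogous piece of $B$ is Lipschitz on $\mathcal{B}_M$ with constant $\le\tfrac{1}{4}$ as soon as $\varepsilon$ lies below an explicit strictly positive threshold; since only finitely many thresholds occur, for $\varepsilon$ smaller than their minimum (and also satisfying (\ref{c1}), (\ref{c4})) the operator $\Phi$ maps $\mathcal{B}_M$ into itself and is Lipschitz there with some constant $\kappa<1$.

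With such an $\varepsilon$ fixed, $\mathcal{B}_M$ is a closed ball in the Banach space $E$, hence a complete metric space, and the Banach Contraction Fixed Point Theorem produces a unique fixed point $(v,s^\prime)\in\mathcal{B}_M$. By the construction of $\Phi$, the identity $(v,s^\prime)=\Phi(v,s^\prime)$ is precisely the system (\ref{112}), (\ref{113}) taken with $s(t)=b+\int_0^t s^\prime(\tau)\,d\tau$, and uniqueness in $\mathcal{B}_M$ is exactly uniqueness among solutions with $\|v\|_\varepsilon\le M$ and $\|s^\prime\|_\varepsilon\le M$. This proves (a).

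For (b) I would observe that the admissible $\varepsilon$ just constructed is the minimum of finitely many explicit elementary expressions in $M$, $b$, $1/b$, $\|f\|_T$, $\|\tilde f\|_T$, $\|\tilde\varphi^\prime\|_b$. Inspecting (\ref{c4}), (\ref{c6}) and the analogous bounds for $A_1,A_2$ and for $B$, one sees that --- once $b$ and $1/b$ are regarded as two independent formal variables $y_2,y_3$ --- each bound is bounded below by an expression that is strictly positive for positive arguments and non-increasing in each of $y_1=M$, $y_2=b$, $y_3=1/b$, $y_4=\|f\|_T$, $y_5=\|\tilde f\|_T$, $y_6=\|\tilde\varphi^\prime\|_b$. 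The only bound not already in this form is $\varepsilon<\min\{1,b/(2M)\}$ from (\ref{c1}); there I would use $\min\{1,b/(2M)\}\ge(1+2M/b)^{-1}=(1+2y_1y_3)^{-1}$, which is non-increasing in $y_1$ and in $y_3$. Letting $h(y_1,\dots,y_6)$ be the minimum of all these lower-bounding expressions yields a function that is monotone decreasing in each argument and for which $\varepsilon:=h(M,b,1/b,\|f\|_T,\|\tilde f\|_T,\|\tilde\varphi^\prime\|_b)$ is admissible, giving (\ref{c14}).

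The laborious part is the estimate-chasing for $B$ and the Voltera operators in the first step, although it is routine and follows the pattern of the $A_3$ computation already displayed; the only points requiring genuine care are checking that the finitely many contraction thresholds really combine to a coefficient $\kappa<1$, and, in (b), rewriting the constraint coming from (\ref{c1}) so that it becomes monotone in the formal arguments $M$ and $1/b$ instead of increasing in $b$.
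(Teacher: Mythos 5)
Your proposal follows essentially the same route as the paper: Banach's fixed point theorem for $\Phi=(A,B)$ on the ball $\mathcal{B}_M$, with the self-mapping property reduced to (\ref{c3})--(\ref{c4}), the contraction property obtained from the Volterra estimates (\ref{c5}) for $A_1,A_2$, the delicate splitting for $A_3$ yielding (\ref{c6})--(\ref{c7}), analogous (routine, omitted) bounds for $B$, and part (b) read off from the monotone dependence of the finitely many thresholds on the parameters. Your explicit replacement of the constraint from (\ref{c1}) by the lower bound $(1+2M/b)^{-1}$, monotone in $M$ and $1/b$ separately, is a small but correct refinement of a point the paper leaves implicit.
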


Next we prove uniqueness of solutions of Problem $\tilde{P}.$
\begin{Lemma}
\label{lem6} For each $ T\leq \infty, $ Problem~$\tilde{P} $ has at
most one solution for $t \in [0,T).$
\end{Lemma}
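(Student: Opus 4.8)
The plan is to show that two solutions of Problem~$\tilde P$ must coincide on every subinterval $[0,T_1]$ with $T_1<T$, using the equivalence with the integral system from Lemma~\ref{lem4} together with a bootstrap (step-by-step continuation) argument of the same flavor as the one used at the end of the proof of Lemma~\ref{lem4} to show $g\equiv 0$. Suppose $(\tilde u^{(1)},s^{(1)})$ and $(\tilde u^{(2)},s^{(2)})$ are two solutions on $[0,T)$; by Lemma~\ref{lem4} the pairs $(v^{(i)},{s^{(i)}}')$ with $v^{(i)}(t)=\tilde u^{(i)}_x(s^{(i)}(t),t)$ are continuous on $[0,T)$ and satisfy the system \eqref{112}--\eqref{113} with $s^{(i)}(t)=b+\int_0^t {s^{(i)}}'(\tau)\,d\tau$. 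Fix $T_1<T$. On $[0,T_1]$ both $s^{(i)}$ are $C^1$ with $s^{(i)}>0$, so there are constants $0<m\le s^{(i)}(t)\le\mathcal M$ and $\|{s^{(i)}}'\|_{[0,T_1]}\le\mathcal M$, and $\|v^{(i)}\|_{[0,T_1]}\le\mathcal M$ for a common $\mathcal M$.

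First I would subtract the two copies of \eqref{112} and the two copies of \eqref{113}, and estimate the difference of the kernels. The key point, exactly as in the displayed estimates preceding Lemma~\ref{lem5}, is that each kernel appearing — $N_x(s(t),t;s(\tau),\tau)$, $N_x(s(t),t;0,\tau)$, $G(s(t),t;\xi,0)$, and the kernels in the triple integral in \eqref{113} — is, on $\tau,t\in[0,T_1]$, Lipschitz in the $s$-arguments with a bound of the form $C(t-\tau)^{-1/2}$ (using $|s^{(1)}(t)-s^{(2)}(t)|\le\int_0^t|{s^{(1)}}'-{s^{(2)}}'|$, the mean value theorem, and the elementary inequalities \eqref{e1}--\eqref{e3}; the factor $(t-\tau)^{-1/2}$ comes from the same $e^{-c/(t-\tau)}\le C(t-\tau)$-type bound used for $A_3^\pm$). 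Hence, writing $V(t):=|v^{(1)}(t)-v^{(2)}(t)|$ and $S(t):=|{s^{(1)}}'(t)-{s^{(2)}}'(t)|$ and $\Delta(t):=\max\{V(t),S(t)\}$, one gets an inequality of the form
\begin{equation}
\label{planineq}
\Delta(t)\le C_1\int_0^t \frac{\Delta(\tau)}{\sqrt{t-\tau}}\,d\tau + C_1\int_0^t\Delta(\tau)\,d\tau,\qquad 0\le t\le T_1,
\end{equation}
with $C_1$ depending only on $T_1,m,\mathcal M,\|f\|_{T_1},\|\tilde f\|_{T_1},\|\tilde\varphi'\|_b$.

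From \eqref{planineq} I would conclude $\Delta\equiv 0$ on $[0,T_1]$ in the same two moves used at the close of Lemma~\ref{lem4}'s proof: choose $t_1>0$ small enough that $C_1(2\sqrt{t_1}+t_1)<1$; since $\sup_{[0,t_1]}\Delta\le C_1(2\sqrt{t_1}+t_1)\sup_{[0,t_1]}\Delta$, this forces $\Delta\equiv 0$ on $[0,t_1]$. The same estimate started from any $t_0$ at which $\Delta$ has already been shown to vanish on $[0,t_0]$ gives $\Delta\equiv 0$ on $[0,t_0+\delta]$ with a $\delta>0$ independent of $t_0$ (because $C_1$ is uniform on $[0,T_1]$), so a finite number of steps yields $\Delta\equiv 0$ on all of $[0,T_1]$. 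Thus $v^{(1)}\equiv v^{(2)}$ and ${s^{(1)}}'\equiv{s^{(2)}}'$ on $[0,T_1]$, hence $s^{(1)}\equiv s^{(2)}$ (same initial value $b$), and then by the representation \eqref{110}--\eqref{120} also $\tilde u^{(1)}\equiv\tilde u^{(2)}$ on $D_{T_1}$. Since $T_1<T$ was arbitrary, the two solutions agree on $[0,T)$. (Alternatively one may invoke a Gronwall-type lemma for weakly singular kernels to pass directly from \eqref{planineq} to $\Delta\equiv0$, but the elementary iteration avoids citing it.)

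The main obstacle is the bookkeeping in deriving \eqref{planineq}: one must check that the difference of the two triple integrals in \eqref{113} is controlled by $\int_0^t\Delta(\tau)\,d\tau$ plus a harmless term, which requires differentiating under the integral sign in $x$ and $\xi$ is not needed (the outer integrals are only in $x$ and $\xi$ over bounded ranges), so the singular factor $(t-\tau)^{-1/2}$ from $N$ integrates to something bounded and the whole $J_5$-type term contributes only an $L^1$-in-$\tau$ weight — this is why the second term in \eqref{planineq} is merely $\int_0^t\Delta$. Handling the corner behavior near $\tau=t$ in the $A_3$-type boundary term, where the Lipschitz constant in $s$ degenerates like $(t-\tau)^{-1}$ before being tamed by the Gaussian, is the delicate point, but it is resolved exactly by the $e^{-b^2/(16t)}\le 16t/b^2$ trick already used in the estimates for $A_3^\pm$ above. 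Everything else is routine.
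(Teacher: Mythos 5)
Your plan is correct in outline, but it takes a genuinely different route from the paper. The paper does not re-derive a Gronwall-type inequality for the difference of two solutions. Instead, it picks $M>1$ dominating $\|v_i\|_{\varepsilon_0}$ and $\|s_i'\|_{\varepsilon_0}$ for both solutions, invokes the uniqueness half of the contraction-mapping result (Lemma~\ref{lem5}) to conclude that the two fixed points of $\Phi$ in ${\mathcal B}_M$ coincide on a small interval $[0,\varepsilon]$, and then continues step by step by \emph{restarting the problem at} $t_0$: the translated pairs $(e^{-\lambda t_0}\tilde u_i(x,t+t_0), s_i(t+t_0))$ solve a new Problem~$\tilde P$ with data $f(t+t_0)$, $e^{-\lambda t_0}\tilde f(t+t_0)$, $e^{-\lambda t_0}\tilde u_1(x,t_0)$, $b_1=s_1(t_0)$, so local uniqueness applies afresh. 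This restart trick is what makes the paper's argument economical: all singular-kernel estimates were already paid for in Section~5, and the re-derived integral equations for the shifted problem have no memory of $[0,t_0]$, so there is nothing left to estimate there. Your route keeps the original equations (\ref{112})--(\ref{113}) with memory back to $\tau=0$ and must therefore control, at each continuation step, the contribution of $\tau\in[0,t_0]$ where the solutions already agree but where the kernels still differ through the argument $s_i(t)$, $t>t_0$; this works (one gets roughly $|s_1(t)-s_2(t)|\int_0^{t_0}(t-\tau)^{-3/2}d\tau\le C\sqrt{t-t_0}\,\sup_{[t_0,t]}|s_1'-s_2'|$), but it is extra bookkeeping the paper avoids.

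Two points in your derivation deserve sharpening. First, the kernel $N_x(s(t),t;s(\tau),\tau)$ is \emph{not} Lipschitz in its spatial arguments with constant $C(t-\tau)^{-1/2}$; the second derivative $K_{xx}$ is of order $(t-\tau)^{-3/2}$. What saves you is that the relevant perturbation of $x-\xi$ is $\bigl|\bigl(s_1(t)-s_1(\tau)\bigr)-\bigl(s_2(t)-s_2(\tau)\bigr)\bigr|\le(t-\tau)\sup|s_1'-s_2'|$, and $(t-\tau)\cdot(t-\tau)^{-3/2}=(t-\tau)^{-1/2}$; after Fubini this does land in the convolution form you want, but the mechanism should be stated this way. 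Second, the $A_3$-type term $\int_0^b G(s(t),t;\xi,0)\tilde\varphi'(\xi)\,d\xi$ contributes a bound of the form $Ct^{1/4}\sup_{[0,t]}|s_1'-s_2'|$ (as in the paper's estimates of $A_3^{\pm}$, which moreover use $s_1(0)=s_2(0)=b$), not a term of convolution type; this is harmless for your iteration since you take suprema anyway, but your displayed inequality is not literally what you obtain, and for the continuation steps with $t_0>0$ you need the separate (easy) observation that $G(\cdot,t;\xi,0)$ is uniformly Lipschitz for $t\ge t_1>0$. With these repairs your argument closes; it is simply a less efficient path to the same conclusion.
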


\begin{proof}
Suppose that $(\tilde{u}_1 (x,t), s_1 (t)) $ and $(\tilde{u}_2 (x,t),
s_2 (t)) $ are two solutions of Problem~$\tilde{P}$  on the interval
$[0,T),\; T\leq \infty.$ Then, in view of Lemma~\ref{lem4}, the pairs
of functions $(v_1 (t), s_1^\prime (t))$ and $(v_2 (t), s_2^\prime
(t)),$ where $v_1 (t) =\frac{\partial \tilde{u}_1}{\partial x} (s_1
(t),t),$ $  v_2 (t) =\frac{\partial \tilde{u}_2}{\partial x} (s_2
(t),t), $ are solutions of the system of integral equations
(\ref{112}), (\ref{113}). Fix $\varepsilon_0<T$ and choose $M>1 $ so
that $$M \geq \max \{\|v_1 (t)\|_{\varepsilon_0},
\|s_1^\prime\|_{\varepsilon_0}, \|v_2 (t)\|_{\varepsilon_0},
\|s_2^\prime\|_{\varepsilon_0} \}.$$ By Lemma~\ref{lem5}, there is a
positive constant $\varepsilon < \varepsilon_0$ such that the pairs
$(v_1 (t), s^\prime_1 (t))$ and $(v_2 (t), s^\prime_2 (t)) $ coincide
on the interval $[0,\varepsilon].$ Therefore, the integral
representation (\ref{111}) implies  $$ \tilde{u}_1 (x,t) =\tilde{u}_2
(x,t), \quad s_1 (t) = s_2 (t) \quad \text{if} \quad 0 \leq t \leq
\varepsilon, \; 0 \leq x \leq s(t).$$

Having proved uniqueness for a small time  interval $0 \leq t \leq
\varepsilon, $ we can proceed in a similar way, step by step, to get
uniqueness for all $t>0.$  Let $t_0 <T $ be a positive number such
that
\begin{equation}
\label{2.120}
 \tilde{u}_1 (x,t) =\tilde{u}_2 (x,t), \quad s_1 (t) = s_2 (t) \quad \text{if}
\quad 0 \leq t \leq t_0, \; 0 \leq x \leq s(t).
\end{equation}
Then $(e^{-\lambda t_0} \tilde{u}_1 (x,t+t_0), s_1 (t+t_0)) $ and
$(e^{-\lambda t_0} \tilde{u}_2 (x,t+t_0), s_2 (t+t_0)) $ are two
solutions of Problem~$\tilde{P}$ on the interval $[0,T-t_0), $  if
considered with $f_1 (t) = f(t+t_0),$
$$ \tilde{f}_1 (t)=e^{-\lambda t_0} \tilde{f}(t+t_0), \;
 \tilde{\varphi}_1 (x) =e^{-\lambda t_0} \tilde{u}_1
 (x,t_0)=e^{-\lambda t_0} \tilde{u}_2 (x,t_0), \;
 b_1 = s_1 (t_0)= s_2 (t_0) $$ instead of $f(t), \tilde{f}(t),
\tilde{\varphi} (x) $ and $b.$ By the above argument, there is a
constant $\varepsilon_1 >0$ such that $$ \tilde{u}_1 (x,t)
=\tilde{u}_2 (x,t), \quad s_1 (t) = s_2 (t) \quad \text{if} \quad 0
\leq t \leq t_0 +\varepsilon_1, \; 0 \leq x \leq s(t). $$ Therefore,
(\ref{2.120}) holds for each $t_0 <T, $ i.e. the solutions
$(\tilde{u}_1 (x,t), s_1 (t)) $ and $(\tilde{u}_2 (x,t), s_2 (t)) $
coincide on $[0,T).$
\end{proof}

\section{Existence of global solution}

Lemma \ref{lem4} and Lemma \ref{lem5}  guarantee that the
Problem~$\tilde{P}$ has a solution for $0\leq t < \varepsilon $ for
sufficiently small $\varepsilon
>0.$ In order to prove the existence of a global solution we need
 a priori estimates for $s(t) $ and  $ \tilde{u}_x (x,t). $

By Lemma \ref{lem2}, there are constants $C_1 =C_1 (T) $ and $C_2
=C_2 (T)$ such that
\begin{equation}
\label{221}  |s^\prime (t)| \leq C_1,  \quad  1/C_2 \leq  |s (t)|
\leq C_2, \quad 0 \leq t <T.
\end{equation}

\begin{Lemma}
\label{lem7} Suppose that the pair of functions $(\tilde{u} (x,t),
s(t)) $ is a solution of Problem~$\tilde{P} $ for $ t \in [0,T), \;
0<T<\infty.$ Then
\begin{equation}
\label{220}  \Psi_T := \sup_{D_T} |\tilde{u}_x (x,t)| <\infty,
\end{equation}
where $D_T =\{(x,t): \; 0 \leq x \leq s(t), \; 0\leq t<T\}. $
 \end{Lemma}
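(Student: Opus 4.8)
The plan is to bound first the boundary trace $v(t)=\tilde u_x(s(t),t)$ and then recover a bound for $\tilde u_x$ on all of $D_T$ from the integral representation. By Lemma~\ref{lem4} the trace $v$ is continuous on $[0,T)$ and satisfies the integral equation (\ref{112}), while on $D_T^{\circ}$ the derivative itself is given by (\ref{f01})--(\ref{f02}), i.e. $\tilde u_x=I_1+I_2+I_3$, with $I_1$ carrying the trace $v$ and $I_2,I_3$ depending only on the data $\tilde f,\tilde\varphi^\prime$. From Lemma~\ref{lem2} (through Lemma~\ref{lem3}) we have the a priori bounds (\ref{221}): $|s^\prime(t)|\le C_1$ and $1/C_2\le s(t)\le C_2$ on $[0,T)$; in particular the mean value theorem gives $|s(t)-s(\tau)|\le C_1|t-\tau|$ for $t,\tau\in[0,T)$.

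The key step is a kernel estimate. Writing $N(x,t;\xi,\tau)=K(x,t;\xi,\tau)+K(-x,t;\xi,\tau)$, the factor $K_x$ is only $O((t-\tau)^{-1})$ pointwise, which by itself is non-integrable; but along the free boundary the factor $x-\xi$ becomes $s(t)-s(\tau)$, which is $O(t-\tau)$, so
$$|N_x(s(t),t;s(\tau),\tau)|\le \frac{C}{\sqrt{t-\tau}},\qquad 0\le\tau<t<T,$$
the reflected term $K(-s(t),t;s(\tau),\tau)$ being harmless since $s(t)+s(\tau)\ge 2/C_2>0$ together with $z^a e^{-\gamma z}\le(a/e\gamma)^a$ (inequality (\ref{e3})). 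This is exactly the type of estimate already used in the proof of Lemma~\ref{lem4} near (\ref{130}). Moreover the two inhomogeneous terms in (\ref{112}) are uniformly bounded: $\int_0^t|N_x(s(t),t;0,\tau)|\,d\tau\le 1$ by (\ref{e1}) and $\int_0^b|G(s(t),t;\xi,0)|\,d\xi\le 2$ by (\ref{e2}). Hence (\ref{112}) yields, with constants $\alpha,\beta$ depending only on $T$ (through $C_1,C_2,\|\tilde f\|_T,\|\tilde\varphi^\prime\|_b$),
$$|v(t)|\le\beta+\alpha\int_0^t\frac{|v(\tau)|}{\sqrt{t-\tau}}\,d\tau,\qquad 0\le t<T.$$

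To close this I iterate the inequality once: substituting it into itself and using $\int_\sigma^t(t-\tau)^{-1/2}(\tau-\sigma)^{-1/2}\,d\tau=B(1/2,1/2)=\pi$ turns the weakly singular kernel into the bounded kernel of an ordinary Gronwall inequality,
$$|v(t)|\le\beta(1+2\alpha\sqrt T)+\pi\alpha^2\int_0^t|v(\tau)|\,d\tau .$$
Applying Gronwall's lemma on each interval $[0,T_1]$ with $T_1<T$ (where $v$ is continuous, hence bounded) gives a bound $|v(t)|\le M_v:=\beta(1+2\alpha\sqrt T)e^{\pi\alpha^2T}$ independent of $T_1$, so $\sup_{[0,T)}|v|\le M_v<\infty$. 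Inserting this into $\tilde u_x=I_1+I_2+I_3$ and using the analogous kernel estimates for $|N_x(x,t;s(\tau),\tau)|$ with $0\le x\le s(t)$ (again bounded via $|s(t)-s(\tau)|\le C_1|t-\tau|$, $s(t)+s(\tau)\ge 2/C_2$, and (\ref{e3})), together with (\ref{e1})--(\ref{e2}) for $I_2$ and $I_3$, one obtains $|I_1(x,t)|\le CM_v$, $|I_2(x,t)|\le\|\tilde f\|_T$, $|I_3(x,t)|\le 2\|\tilde\varphi^\prime\|_b$ uniformly on $D_T^{\circ}$. Since $\tilde u_x$ extends continuously to $D_T$ by Lemma~\ref{lem4}, this gives $\Psi_T<\infty$.

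The main obstacle is the kernel estimate $|N_x(s(t),t;s(\tau),\tau)|\le C(t-\tau)^{-1/2}$: the naive bound on $K_x$ is $O((t-\tau)^{-1})$ and non-integrable, so the Lipschitz continuity of $s$ furnished by (\ref{221}) is essential. A secondary technical point is that the Gronwall step must be carried out on compact subintervals $[0,T_1]\subset[0,T)$ with constants independent of $T_1$, since the endpoint $T$ is excluded and $v$ is only known to be continuous (not a priori bounded) on $[0,T)$.
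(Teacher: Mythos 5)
Your proposal follows the same overall strategy as the paper: reduce to a bound on the boundary trace $v(t)=\tilde u_x(s(t),t)$ via the integral equation (\ref{112}), using the kernel estimate $|N_x(s(t),t;s(\tau),\tau)|\le C(t-\tau)^{-1/2}$ obtained from the Lipschitz bound $|s(t)-s(\tau)|\le C_1|t-\tau|$ and from (\ref{e3}) for the reflected term, and then feed the bound on $v$ back into the representation $\tilde u_x=I_1+I_2+I_3$. The one genuine difference is how the weakly singular Gronwall inequality is closed: you iterate it once, using $\int_\sigma^t(t-\tau)^{-1/2}(\tau-\sigma)^{-1/2}\,d\tau=\pi$, to convert the kernel into a bounded one and then apply the standard Gronwall lemma on compact subintervals $[0,T_1]\subset[0,T)$ with constants independent of $T_1$; the paper instead splits the integral at $T-\delta$ with $(C_1+2C_2^3)\sqrt\delta<1/2$ and absorbs the tail into $\mu(t)=\sup_{[0,t]}|v|$. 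Both arguments are correct; yours is the more standard singular-Gronwall route and yields an explicit exponential bound, while the paper's absorption trick avoids the Fubini/Beta-function computation.

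One step in your final paragraph is stated too loosely. For the interior bound on $I_1(x,t)$ with $0\le x\le s(t)$, the pointwise estimate $|N_x(x,t;s(\tau),\tau)|\le C(t-\tau)^{-1/2}$ is \emph{false}: the relevant factor in the kernel is $x-s(\tau)$, not $s(t)-s(\tau)$, and for $|x-s(\tau)|\sim\sqrt{t-\tau}$ the kernel is of order $(t-\tau)^{-1}$. What saves the argument (and what the paper does) is the splitting $s(\tau)-x=(s(\tau)-s(t))+(s(t)-x)$: the first piece gives an integrand bounded by $C_1(t-\tau)^{-1/2}$ via the Lipschitz bound, while the second piece is handled by the \emph{integrated} estimate (\ref{e1}) after comparing the two Gaussian exponents, the correction $\bigl[(s(t)-x)^2-(s(\tau)-x)^2\bigr]/4(t-\tau)$ being bounded by $C_1C_2$. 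With that splitting inserted, your proof is complete.
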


\begin{proof}
It is enough to prove that
\begin{equation}
\label{222}  m: = \sup_{[0,T)} |v (t)| < \infty,
\end{equation}
where $v(t) =\tilde{u}_x (s(t),t).$ Indeed, by (\ref{f01}),
$\tilde{u}_x (x,t) = \sum_{\nu=1}^3 I_\nu (x,t), $ where the
integrals $I_\nu (x,t)$ are given by (\ref{f02}).

We have $ I_1 (x,t) = I_{1,1} (x,t)+ I_{1,2} (x,t), $ where $I_{1,1}
(x,t)$ and $ I_{1,2} (x,t) $ are given in (\ref{f11}) and
(\ref{f12}). First we estimate $|I_{1,1} (x,t)|:$
$$ |I_{1,1} (x,t)| \leq  m \cdot \left ( \frac{1}{\sqrt{\pi}} \int_0^t
\frac{|s(t) -s(\tau)|}{4(t-\tau)^{3/2}}
e^{-\frac{(s(\tau)-x)^2}{4(t-\tau)}} d\tau  \right. $$
$$ + \left.
 \frac{1}{\sqrt{\pi}} \int_0^t \frac{|s(t)-x|}{4(t-\tau)^{3/2}}
e^{-\frac{(s(t)-x)^2}{4(t-\tau)}} \exp \left [ \frac{(s(t)-x)^2
-(s(\tau)-x)^2}{4(t-\tau)}  \right ] d\tau \right ).$$ From
(\ref{221}) it follows that $|s(t) -s(\tau)|/(t-\tau) \leq C_1,$ so
the first integral in the brackets does not exceed $  \int_0^t
C_1/(2\sqrt{t-\tau}) d\tau \leq C_1 \sqrt{T}. $ By (\ref{221}), the
expression in the square brackets in the integrand of the second
integral can be estimated from above by
$$
\frac{|s(t)-s(\tau)|}{4(t-\tau)} |s(t) + s(\tau) - 2x | \leq C_1 C_2.
$$
Therefore, in view of (\ref{e1}), the second integral does not exceed
$e^{C_1C_2}.$ Hence,
$$
|I_{1,1} (x,t)| \leq  m \cdot \left (C_1 \sqrt{T} + e^{C_1C_2} \right
).
$$

By (\ref{221}) we have $ C^{-1}_2 \leq x+s(\tau) \leq 2C_2 $ for $x <
s(t).$ From these inequalities and (\ref{e1}) it follows
$$ |I_{1,2} (x,t)| \leq m \cdot 2 C_2^2 \int_0^t  K_x
(C^{-1}_2,t;0,\tau) d\tau \leq m \cdot 2 C_2^2. $$

On the other hand (\ref{e1}) and (\ref{e2}) imply
\begin{equation}
\label{224} |I_2 (x,t) | \leq \|\tilde{f}\|_T, \quad  |I_3 (x,t) |
\leq 2 \|\tilde{\varphi}^\prime \|_b.
\end{equation}

Hence,
$$
\sup_{D_T} |\tilde{u}_x (x,t)| \leq m \cdot \left (C_1 \sqrt{T} +
e^{C_1C_2} + 2C_2^2 \right ) + \|\tilde{f}\|_T +
2\|\tilde{\varphi}^\prime \|_b,
$$
i.e.,  (\ref{222}) implies (\ref{220}).

Next we prove (\ref{222}). By (\ref{112}), $ v(t) = 2\sum_{\nu=1}^3
I_\nu (s(t),t), $ where the integrals $I_\nu (x,t)$ are given by
(\ref{f02}).

First we consider $I_1 (s(t),t) = \int_0^t N_x (s(t),t;s(\tau),\tau)
v(\tau) d\tau. $ Since $ N (x,t;\xi,\tau)=  K (x,t;\xi,\tau)+ K
(-x,t;\xi,\tau),$ we have
$$ |N_x (s(t),t;s(\tau),\tau)|\leq
\frac{|s(t)-s(\tau)|}{4(t-\tau)^{3/2}}e^{-\frac{(s(t)-s(\tau))^2}{4(t-\tau)}}+
\frac{|s(t) +s(\tau)|}{4(t-\tau)^{3/2}} e^{-\frac{(s(t)
+s(\tau))^2}{4(t-\tau)}}.$$

From (\ref{221}) it follows that $|s(t) -s(\tau)|/(t-\tau)\leq C_1,$
so the first term on the right in the above inequality is less than $
C_1/(4\sqrt{t-\tau}). $ On the other hand,  (\ref{221}) implies
$2/C_2 \leq s(t) +s(\tau) \leq 2 C_2. $ Therefore, in view of
(\ref{e3}),
$$
\frac{|s(t) +s(\tau)|}{4(t-\tau)^{3/2}} e^{-\frac{(s(t)
+s(\tau))^2}{4(t-\tau)}} \leq  \frac{2C_2}{4(t-\tau)^{3/2}}
e^{-\frac{1}{C_2^2(t-\tau)}}   \leq C_2^3 \frac{1}{2\sqrt{t-\tau}}
\,.
$$
Thus, we obtain
\begin{equation}
\label{300}
 |N_x (s(t),t;s(\tau),\tau)|\leq
(C_1/2 + C_2^3) \frac{1}{2\sqrt{t-\tau}}, \quad 0 \leq \tau < t <T.
\end{equation}

Let  $\delta \in (0,T)$  (later we will choose $\delta$ sufficiently
small), and let
$$
\mu (t) = \sup \{|v(\tau)|: \; 0 \leq \tau \leq  t \}.
$$
For $t\in (T-\delta, T), $   (\ref{300}) implies
$$
|I_1 (s(t),t)| \leq   \mu (T-\delta) \, \int_0^{ T-\delta} |N_x
(s(t),t; s(\tau), \tau)| d \tau + \mu (t) \,  \int_{ T-\delta}^t |N_x
(s(t),t; s(\tau), \tau)| d \tau
$$
$$
\leq \mu (T-\delta) \cdot  (C_1/2 + C_2^3) \sqrt{T}   + \mu (t) \cdot
(C_1/2 + C_2^3) \sqrt{\delta}.
$$
Therefore, in view of (\ref{112}) and (\ref{224}), we obtain
\begin{equation}
\label{d0} |v(t)| \leq \mu (T-\delta) \cdot (C_1 + 2C_2^3) \sqrt{T} +
\mu (t) \cdot (C_1 + 2C_2^3) \sqrt{\delta} + 2\|\tilde{f}\|_T+
4\|\tilde{\varphi}^\prime \|_b
\end{equation}
for $ t \in (T-\delta, T).$

Choose $\delta $ so that
$$
 (C_1 + 2C_2^3) \sqrt{\delta} <1/2.
$$
Then (\ref{d0}) implies
$$
\mu (t) \leq  2\mu (T-\delta) (C_1 + 2C_2^3) \sqrt{T} +
4\|\tilde{f}\|_T+ 8\|\tilde{\varphi}^\prime \|_b \quad   \text{for}
\;\; t\in (T-\delta, T).
$$
Hence, $ m = \sup \{ \mu (t), \;t \in [0,T) \}  <\infty, $ i.e.,
(\ref{222}) holds, which completes the proof of Lemma~\ref{lem7}.
\end{proof}

{\em Proof of Theorem \ref{thm2}.} By Lemma \ref{lem6},
Problem~$\tilde{P}$ has at most one global solution.  Now we prove
that Problem $\tilde{P}$ has a global solution. Assume the contrary,
and let $T$ be the greatest positive number such that
Problem~$\tilde{P}$ has a solution for $t \in [0,T).$ Let the pair of
functions $\tilde{u} (x,t) $ and $s(t) $ be a solution of
Problem~$\tilde{P}$ for $t \in [0,T).$

For each $t_0 < T $ we can consider a ``modified`` Problem
$\tilde{P}$
 with data
\begin{equation}
\label{322} f_1 (t)= f(t_0+t), \;\;  \tilde{f}_1 (t)=e^{-\lambda t_0}
\tilde{f}(t_0+t), \;\;
 \tilde{\varphi}_1 (x) =e^{-\lambda t_0} \tilde{u}
 (x,t_0), \;\;
 b_1 = s (t_0)
\end{equation}
instead of $f(t), \tilde{f}(t), \tilde{\varphi} (x) $ and $b.$ By the
local existence--uniqueness result given in Lemma~\ref{lem5}, for
each $M_1 > 1 $ which satisfies
\begin{equation}
\label{2c2}  M_1 \geq  1+ (8+5b_1^2)\|\tilde{\varphi}_1^\prime
\|_{b_1} + 4b_1^2 \|f_1\|_T + 4 b_1^2 \tilde{\sigma}
\end{equation}
and each $\varepsilon >0$ with
\begin{equation}
\label{2c14} \varepsilon \leq h(M_1, b_1, 1/b_1, \|f_1\|_T,
\|\tilde{f}_1\|_T, \|\tilde{\varphi}_1^\prime\|_{b_1}),
\end{equation}
the ``modified`` Problem $\tilde{P}$ has a solution $(\tilde{u}_1
(x,t), s_1 (t) )$ for $0\leq t <\varepsilon.$  Then, the pair
$(\tilde{U} (x,t), S(t))$  with
$$  \tilde{U} (x,t) =\begin{cases} \tilde{u} (x,t) & 0\leq t \leq t_0 \\
e^{\lambda t_0} \tilde{u}_1 (x,t-t_0) & t_0 \leq t < t_0 +
\varepsilon
\end{cases}, \quad S(t)=
\begin{cases} s(t)& 0\leq t \leq t_0\\
s_1 (t-t_0) & t_0 \leq t < t_0 + \varepsilon
\end{cases}
$$
is a solution of Problem $\tilde{P}$ for $0\leq t <t_0 +
\varepsilon.$

 Moreover, in view of the a priori estimates given in
Lemma~\ref{lem2} and Lemma~\ref{lem7}, by Lemma~\ref{lem5} we can
choose {\em one and the same} $\varepsilon$ for every $ t_0 <T. $
Indeed, let us set
$$  \tilde{M} =1+ (8+5C_2^2) \Psi_T + 4C_2^2 \|f\|_{2T} + 4 C_2^2
\tilde{\sigma}, \quad \tilde{\varepsilon} = h(\tilde{M}, C_2, C_2,
\|f\|_{2T}, \|\tilde{f}\|_{2T}, \Psi_T).
$$

Therefore, choosing $t_0> T-\tilde{\varepsilon}, $ we get the
existence of a solution of Problem~$\tilde{P}$ for $ t \in [0, t_0
+\tilde{\varepsilon}) $ with $ t_0 +\tilde{\varepsilon} >T, $ which
contradicts the choice of $T.$ Hence Problem~$\tilde{P}$ has a global
solution for $t \in [0, \infty), $ i.e., Theorem~\ref{thm2} holds.

In view of Lemma~\ref{lem3}, this implies that Problem~$P$ has a
unique global solution, i.e., Theorem~\ref{thm1} holds as well.

\section{Concluding remarks}

1. During the last 40 years various mathematical models for evolution
of tumors have been developed and analyzed --  see the survey papers
\cite{BLM08, Fr07} and the bibliography therein. Some of those models
are in the form of free boundary problems for partial differential
equations, whereby the tumor surface is a free boundary and the tumor
growth is determined by the level of a diffusing nutrient
concentration \cite{G72,G76}  (see also \cite{BLM08}--\cite{CF02},
\cite{FR99}). The main physical and biological concepts underlying
such type of models are the mass conservation law and
reaction--diffusion processes within the tumor. Usually additional
geometric assumptions on the shape of the tumor are imposed -- see,
for instance, \cite{G72,FR99}, where the tumor is supposed to be
spherically symmetric.

A slight modification of those  models is considered in \cite{R1}. It
describes the growth of an avascular solid tumor which receives
nutrient supply via a diffusion process only through some part of its
boundary  (called {\em base} of the tumor), and it is assumed that
there is no nutrient flow through the remaining part of the boundary.
Moreover, the tumor is supposed to be thin and approximately
disc-shaped, so only one spatial dimension, say $x,$ is considered.
With tumor's base situated at $x=0$ the nutrient concentration
$\sigma (x,t)$ satisfies the reaction--diffusion equation
\begin{equation}
\label{0.1} c \frac{\partial \sigma }{\partial t} = \frac{\partial^2
\sigma}{\partial x^2} - \lambda \sigma, \quad 0 < x < s(t), \;\; t>0,
\end{equation}
where $s(t)>0 $ is the tumor's thickness at time $t,$  $ \lambda
=const >0, $ $\lambda \sigma$ is the nutrient consumption rate, and
$c>0$ is a dimensionless constant coming as a ratio of the nutrient
diffusion time scale to the tumor growth time scale.

Following \cite{G72}, it is assumed  that all tumor cells are
physically identical in volume and mass, and that the cell density is
constant throughout the tumor. As in  \cite{FR99}, the cell
proliferation rate within the tumor is given by $ P(\sigma )= \mu (
\sigma - \tilde{\sigma}),$ where $\mu $ and $\tilde{\sigma}$ are
positive constants. These assumptions lead to the equation
\begin{equation}
\label{0.2}  s^\prime (t) = \mu \int_0^{s(t)} (\sigma (x,t) -
\tilde{\sigma}) dx, \quad     t\geq 0.
\end{equation}
In addition, the following {\em mixed type} boundary conditions hold:
\begin{equation}
\label{0.3} \sigma (0, t) = f(t), \quad   \;\; t\geq 0,
\end{equation}
\begin{equation}
\label{0.4}  \sigma(x,0) = \varphi (x),  \quad  x\in [0,b], \;\;
s(0)=b >0, \quad  \varphi (0) = f(0),
\end{equation}
\begin{equation}
\label{0.5} \frac{\partial \sigma}{\partial x} (s(t), t) = 0, \quad
t>0,
\end{equation}
where $f(t)>0 $ is the external nutrient concentration at the base of
the tumor at time $t$,   $\varphi (x)>0 $ is the initial nutrient
concentration within the tumor, and the condition (\ref{0.5}) comes
because it is assumed that there is no  nutrient transfer through the
free boundary $x= s(t). $

The parameters $ c,  \mu $ and $\lambda $  in
(\ref{0.1})--(\ref{0.5}) depend on the choice of time and length
units. One may scale out $x$ and $t$ in an appropriate way in order
to get $c=1$ and $ \mu= 1 $ ($\lambda $ may change as well), which
shows that Problem (\ref{0.1})--(\ref{0.5}) is equivalent to
Problem~$P.$\vspace{2mm}

2. Another interesting question in the study of mathematical models
of tumor growth is under what conditions does the tumor grow, shrink
or become dormant. In order to answer that question one needs to find
the stationary solution (which gives the dormant case) and analyze
its asymptotic stability (see \cite{FR99,CF01,CF02} and the
bibliography there).

In the case of Problem~$P,$ if $f(t) = \bar{\sigma}= const $ then it
is easy to see that the stationary solution is given by the pair
$(\bar{u}(x), \bar{b}), $ where
\begin{equation}
\label{r1} \bar{u}(x) = \bar{\sigma}\, \frac{\cosh \left (
\sqrt{\lambda} (x-\bar{b})\right )}{\cosh \left ( \sqrt{\lambda}
\,\bar{b} \right )}
\end{equation}
and $\bar{b} $ is determined by the equation
\begin{equation}
\label{r2} \bar{\sigma} \, \tanh \left (\bar{b}\sqrt{\lambda} \right
) = \tilde{\sigma}\bar{b}\, \sqrt{\lambda} \,.
\end{equation}
\vspace{2mm}

3. In (\ref{6}) of Theorem \ref{thm1}, the assumptions $f(t)>0 $ for
$t \in [0, \infty) $ and $\varphi (x) >0 $ for $x\in [0,b] $ come
from the corresponding mathematical model (Section 7.1). However, the
result stated in  Theorem~\ref{thm1} remains valid without those
requirements.

Indeed, let $f(t) \in C^1 ([0,\infty))$   and $ \varphi (x) \in C^2
([0,b]) $ be arbitrary functions. Then, under the assumptions of
Lemma~\ref{lem2}, the following a priori  estimates hold:
\begin{equation}
\label{c1.7} | u(x,t)| \leq C_T, \quad  0 \leq x \leq s(t), \;\; 0
\leq t <T,
\end{equation}
\begin{equation}
\label{c1.8} -(C_T + \tilde{ \sigma}) s(t) \leq s^\prime (t) \leq
(C_T-\tilde{\sigma})s(t), \quad be^{-(C_T +\tilde{\sigma})t} \leq
s(t) \leq be^{(C_T-\tilde{\sigma})t},
\end{equation}
where $b=s (0), \;   C_T = \max \left ( \sup_{[0,T)} |f(t)|, \;
\sup_{[0,b]} |\varphi (x)| \right ).$ The proof of Theorem~\ref{thm1}
is the same, but one needs to use the estimates  (\ref{c1.7}) and
(\ref{c1.8}) instead of (\ref{1.7}) and (\ref{1.8}) in
Lemma~\ref{lem2}.\vspace{2mm}

4. It is known that the free boundary in the one-dimensional Stefan
problem (see \cite[Ch. 8]{F}, \cite{Rub}, \cite[Ch. 17]{C84}) is a
$C^\infty$-curve (see \cite{CH68,CP71,Sch} and the bibliography
therein). In the context of tumor models a similar result is proven
in \cite[Theorem 4.1]{CF02}.

In the case of our Problem $P$ it is easy to see that $s(t)\in C^2
([0, \infty)).$  Indeed, since $u_t (x,t)$ is defined and continuous
for $ 0 \leq x \leq s(t), \, t>0, $ from (\ref{2}) it follows
$$
s^{\prime \prime}(t) = [u(s(t),t) -\tilde{\sigma}] s^\prime (t)+
\int_0^{s(t)} u_t (x,t) dx.
$$
Therefore, taking into account that
$$
\int_0^{s(t)} u_t (x,t) dx= \int_0^{s(t)} ( u_{xx}- \lambda u) dx =
u_x (s(t),t) -u_x (0,t) - \lambda \int_0^{s(t)} u(x,t) dx
$$
we obtain, in view of (\ref{2}) and (\ref{5}),
$$
s^{\prime \prime}(t) = [u(s(t),t) -\tilde{\sigma}- \lambda ] s^\prime
(t) - u_x (0,t) -\lambda \, \tilde{\sigma} s(t),
$$
where the expression on the right is a continuous function for $t\geq
0,$ i.e., $s(t) \in C^2 ([0,\infty)).$

 However, higher derivatives of $s(t) $ may not exist if we assume
 $f \in C^1 ([0,\infty))$ only, since
the condition (\ref{2}) in Problem~$P$ is nonlocal (compare with the
case of one-dimensional Stefan problem, where the infinite
differentiability of the free boundary does not require infinite
differentiability of the boundary data at $x=0$ -- see \cite{Sch}).
In our case, one can prove the following: {\em In Problem~P, the free
boundary $x=s(t), \; t \in (0,\infty)$ is an infinitely
differentiable curve if and only if} $f(t) \in C^\infty
((0,\infty)).$ We will present the details somewhere else.


\begin{thebibliography}{99}


\bibitem{BLM08} N. Bellomo, N. K. Li, P. K. Maini,  On the foundations of cancer
modelling: selected topics, speculations, and perspectives.  Math.
Models Methods Appl. Sci.  18  (2008),  no. 4, 593--646.

\bibitem{C84}  J. R. Cannon, The one-dimensional heat equation,
Encyclopedia of Mathematics and its Applications, vol. 23,
Addison-Wesley, Menlo Park, 1984.

\bibitem{CH68} J. R. Cannon and C. D. Hill,
On the infinite differentiability of the free boundary in a Stefan
problem, J. Math. Anal. Appl. {\bf 22} (1968), 385--397.

\bibitem{CP71} J. R. Cannon and M. Primicerio, A two phase Stefan
problem: regularity of the free boundary, Ann. Mat. Pure. Appl. {\bf
88} (1971), 217--228.


\bibitem{CF01}  S. Cui,  A. Friedman,
Analysis of a mathematical model of the effect of inhibitors on the
growth of tumors, Math. Biosci. {\bf 164} (2000),   103--137.


\bibitem{CF02}  S. Cui,  A. Friedman,
Analysis of a mathematical model of the growth of necrotic tumors, J.
Math. Anal. Appl. {\bf 255} (2001),  636--677.




\bibitem{F} A. Friedman,  Partial Differential Equations of
Parabolic Type, Prentice--Hall, Englewood Cliffs, N.J., 1964.



\bibitem{FR99}  A. Friedman,  F. Reitich,
Analysis of a mathematical model for the growth of tumors, J. Math.
Biol. {\bf 38} (1999), 262--284.

\bibitem{Fr07}  A. Friedman, Mathematical analysis and challenges arising from models of tumor
growth.  Math. Models Methods Appl. Sci.  17  (2007),  suppl.,
1751--1772.




\bibitem{G72}   H. P. Greenspan,
Models for the growth of a solid tumor by diffusion, Studies of Appl.
Math. {\bf 52} (1972), 317--340.

\bibitem{G76}  H. P. Greenspan,
On the growth and stability of cell cultures and solid tumors,
Journal of Teoret. Biol. {\bf 56} (1976), 229--242.

\bibitem{Rub} L. I. Rubenstein,  The Stefan problem.
Translations of Mathematical Monographs, Vol. 27. AMS, Providence,
R.I., 1971.

\bibitem{Sch} D. Schaeffer, A new proof of infinite differentiability
of the free boundary in the Stefan problem, J. Diff. Equat. {\bf 20}
(1976), 266--269.


\bibitem{R1} R. I. Semerdjieva,
Mathematical model for the growth of tumors with free boundary
surface, Compt. Rend. Acad. Bulg. Sci. {\bf 61} (2008), 1509--1516.

\bibitem{R2} R. I. Semerdjieva,
An Existence and Uniqueness Theorem for a free boundary value problem
modeling tumor growth, Compt. Rend. Acad. Bulg. Sci. {\bf 62},
(2009), 167--174.


\end{thebibliography}
\end{document}